\numberwithin{equation}{section}
\newcommand{\N}{\mathbb{N}}
\newcommand{\dd}[2]{\dfrac{\partial #1}{\partial #2}}
\newcommand{\disp}[1]{\displaystyle{#1}}
\newcommand{\epsi}{\varepsilon}
\newcommand{\1}{\mathds{1}}
\newcommand{\al}{\al}
\newcommand{\te}{\theta}
\newcommand{\lr}[1]{\langle #1 \rangle}
\newcommand{\PP}{\mathcal{P}}
\newcommand{\LD}{L^2_\Phi}
\newcommand{\Oe}{{\Omega_\epsi}}
\newcommand{\C}{\mathbb{C}}
\newcommand{\oz}{\bar{z}}
\newcommand{\ow}{\overline{w}}
\newcommand{\az}{\alpha}
\newcommand{\lv}{\left\vert}
\newcommand{\rv}{\right\vert}
\newcommand{\CC}{\mathbb C}
\newcommand{\tra}{\mathrm{tr}}
\newcommand{\Ppm}{\Pi_\Phi^M}
\newcommand{\ze}{\zeta}
\title[A quantitative version of Catlin-D'Angelo-Quillen
theorem]{A quantitative version of  Catlin-D'Angelo-Quillen theorem}
\author{Alexis Drouot}
\email{drouot@clipper.ens.fr}
\author{Maciej Zworski}
\email{zworski@math.berkeley.edu}
\address{Department of Mathematics, Evans Hall, University of California,
Berkeley, CA 94720, USA}
\newtheorem{thm}{Theorem}
\newtheorem{lem}{Lemma}[section]
\newtheorem{theorem}[thm]{Theorem}
\begin{document}

\begin{abstract}
Let $f\left(z,\oz\right)$ be a positive bi-homogeneous hermitian form
on $\C^n$, of degree $m$. 
A theorem proved by Quillen and rediscovered by Catlin and D'Angelo states 
that for $N$ large enough, $\lr{z,\oz}^Nf\left(z,\oz\right)$ can be
written as the sum of squares of homogeneous polynomials of degree
$m+N$. We show this works for $ N \geq C_f (( n + m ) \log n )^3 $
where $ C_f $ has a natural expression in terms of coefficients of $ f $, inversely proportional to the minimum of $f$ on the sphere.  The proof
uses a semiclassical point of view on which $ 1/N $ plays a role of 
the small parameter $ h$.

\end{abstract}

\maketitle


\section{Introduction and main result}
Let    $ f = f ( z , \bar z ) $ be a bi-homogeneous form of degree $m \geq 1$ on $\C^n$:
\begin{equation}\label{eq:deff}
   f \left( z, \bar z \right) := \sum_{ | \alpha | = |\beta | = m }
c_{\alpha \beta } z^\alpha \bar z^\beta , \ \ z \in \CC^n ,  \ \
c_{\alpha \beta } \in \C . 
\end{equation}
Here $n \geq 2$, $\az = (\az_1, ..., \az_n) \in \N^n$,  $|\az|: =
\az_1 + ... + \az_n$,  $z^\az :=
z_1^{\az_1} ... z_n^{\az_n}$. 
The following theorem 
 was proved by Quillen in $1968$ \cite{Quillen}, and rediscovered by Catlin and
 D'Angelo in $1996$ \cite{Cad}:

\begin{theorem} \label{thm:qui}
\label{t:cdq}
Suppose $ f $ is given by \eqref{eq:deff} and that 
\[   f ( z ,\bar z ) > 0 , \ \  z \neq 0 . \]
Then there exists $ N_0 $ such that for $ N > N_0 $ 
\begin{equation}
\label{eq:cdq}  \| z \|^{2N} f ( z, \bar z )  = \sum_{ j=1}^{d_N} | P_j^N ( z )
|^2 , \ \ \ \| z \|^2 := \sum_{ j =1 }^n | z_j |^2 , 
\end{equation}
where $ P_j ^N ( z ) $ are homogeneous polynomials of degree $ m + N
$, and $ d_N = \binom{n+m+N}{N} $ is the dimension of the space
of homogeneous polynomials of degree $ m + N $.
\end{theorem}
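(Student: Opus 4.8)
The plan is to recast \eqref{eq:cdq} as a positivity statement for hermitian operators on the space $V_k$ of homogeneous polynomials of degree $k$ in $z_1,\dots,z_n$, and then to analyse that statement semiclassically, with $1/N$ in the role of Planck's constant $h$. First I would set up the standard dictionary: equip $V_k$ with the Bombieri (Fischer) inner product $\langle\cdot,\cdot\rangle_k$, for which the evaluation functionals are represented by reproducing kernels $e_z\in V_k$, with $\langle p,e_z\rangle_k=p(z)$ and $\|e_z\|_k^2=\|z\|^{2k}$. Every bi-homogeneous form of bidegree $(k,k)$ equals $z\mapsto\langle G\,e_z,e_z\rangle_k$ for a unique hermitian $G$ on $V_k$, and it is a sum of squares $\sum_j|P_j|^2$ with $P_j\in V_k$ \emph{if and only if} $G\succeq 0$, in which case one can take $\operatorname{rank}G\le\dim V_k$ squares. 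Multiplication by $\|z\|^2$ corresponds to the unital, completely positive map $G\mapsto(k{+}1)^{-2}\sum_{i=1}^{n}\partial_i^{*}G\,\partial_i$ from hermitian operators on $V_k$ to hermitian operators on $V_{k+1}$ (here $\partial_i:V_{k+1}\to V_k$). Iterating this $N$ times, starting from the operator $F$ of $f$ on $V_m$, produces the operator $B_N$ of $\|z\|^{2N}f$ on $V_{m+N}$, so the theorem reduces to: $B_N\succeq 0$ for $N$ large. The point is that $F$ is in general merely self-adjoint --- were it positive, $f$ itself would already be a sum of squares --- but its covariant symbol is strictly positive, $\langle F\,e_z,e_z\rangle_m=f(z,\bar z)\ge\delta\|z\|^{2m}$, where $\delta:=\min_{\|z\|=1}f>0$.

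Next I would bring in the semiclassical estimate. Using the resolution of the identity $d_k\int_{S^{2n-1}}|e_z\rangle\langle e_z|\,d\sigma(z)=\Id_{V_k}$ ($d_k:=\dim V_k$, $d\sigma$ the normalized surface measure) together with the explicit map above, one computes the reproducing kernel of $B_N$,
\[
\langle B_N\,e_w,e_z\rangle_{m+N}=\langle z,w\rangle^{N}\,\widetilde f(z,\bar w),\qquad \langle z,w\rangle:=\sum_{i=1}^{n} z_i\bar w_i ,
\]
where $\widetilde f$ is the polarization of $f$; in particular $\langle B_N e_z,e_z\rangle=\|z\|^{2N}f(z,\bar z)$. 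Expanding an arbitrary $v\in V_{m+N}$ in coherent states gives
\[
\langle B_N v,v\rangle=d_{m+N}^{\,2}\iint_{S^{2n-1}\times S^{2n-1}}\overline{\langle v,e_z\rangle}\,\langle v,e_w\rangle\;\langle z,w\rangle^{N}\,\widetilde f(z,\bar w)\,d\sigma(z)\,d\sigma(w).
\]
On the diagonal the kernel is $f(z,\bar z)\ge\delta$; off it, $|\langle z,w\rangle^{N}|\le1$ decays like $e^{-cN\operatorname{dist}(z,w)^2}$, so the integral concentrates near the diagonal at scale $1/\sqrt N$. The target is the quantitative bound $\langle B_N v,v\rangle\ge(\delta-C_f/N)\,\|v\|^2$ for all $v\in V_{m+N}$, with $C_f$ controlled by the $C^2$-size of $f$ on the sphere, i.e.\ by the coefficients of $f$. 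Equivalently, in Berezin--Toeplitz language on $\mathbb{CP}^{n-1}$, $B_N$ and the anti-Wick operator $T_f:=d_{m+N}\int_{S^{2n-1}}f\,|e_z\rangle\langle e_z|\,d\sigma$ both quantize $f$ and differ by a first-order correction, $\|B_N-T_f\|=O(1/N)$, whereas $T_f\succeq\delta\,\Id$ directly from positivity of $f$ and the resolution of the identity.

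Granting this, the conclusion is immediate: taking $N_0:=C_f/\delta$, for $N>N_0$ we have $B_N\succeq(\delta-C_f/N)\,\Id\succ0$. Diagonalizing $B_N=\sum_{j=1}^{d_N}\mu_j\,|u_j\rangle\langle u_j|$ in an orthonormal basis of $V_{m+N}$, with $\mu_j\ge0$ and $d_N=\dim V_{m+N}$, and setting $P_j^N:=\sqrt{\mu_j}\,u_j$ --- a homogeneous polynomial of degree $m+N$ --- we obtain $\|z\|^{2N}f(z,\bar z)=\langle B_N e_z,e_z\rangle=\sum_{j=1}^{d_N}|P_j^N(z)|^{2}$, which is \eqref{eq:cdq}.

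I expect the main obstacle to be the comparison $\|B_N-T_f\|=O(1/N)$. Both operators quantize the same symbol $f$, but $B_N$ is a compression of $F\otimes\Id^{\otimes N}$ rather than a genuine Toeplitz operator, so one must either run the Berezin--Toeplitz symbol calculus for this particular family or, after diagonalizing $F$ and reducing to $f=|P|^{2}$, estimate directly the off-diagonal spreading of $P(z)\overline{P(w)}$ against the Gaussian decay of $\langle z,w\rangle^{N}$ --- the elementary input being $d_{m+N}\int_{S^{2n-1}}|\langle z,w\rangle|^{2N}\operatorname{dist}(z,w)^{2}\,d\sigma(w)=O(1/N)$. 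For the qualitative statement above it is enough that \emph{some} such bound holds; pinning down $C_f$, and especially its dependence on $n$ and $m$ --- ultimately the $\big((n+m)\log n\big)^{3}$ of the abstract --- is exactly the quantitative refinement carried out in the rest of the paper.
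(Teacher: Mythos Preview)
Your proposal is correct and is essentially the same semiclassical argument as the paper's, phrased in coherent-state language on $S^{2n-1}$ (equivalently $\mathbb{CP}^{n-1}$) rather than in the Bargmann--Fock model on $\C^n$. One point worth noting: the operator you call $B_N$ \emph{is} a genuine Toeplitz operator --- the paper writes it as $\Pi_\Phi\,q\,\Pi_\Phi$ with the explicit anti-Wick symbol $q=\sum_{j=0}^{m}\tfrac{h^j}{j!}\bigl(-\tfrac14\Delta\bigr)^{j} f$ (formula~\eqref{eq:defq}) --- so the comparison $\|B_N-T_f\|=O(1/N)$ that you correctly flag as the crux becomes simply $q-f=O(h)$ on the unit sphere. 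Because $q$ is polynomial and hence unbounded on $\C^n$, the paper cannot just bound $\|q-f\|_\infty$; it instead proves the concentration Lemma~\ref{lem:estE} localizing $\PP_{m+N}$ to a thin annulus around the sphere, which is the Bargmann--Fock incarnation of your ``Gaussian decay of $\langle z,w\rangle^N$''. Your projective formulation would sidestep that localization for the qualitative Theorem~\ref{t:cdq}; the paper's route through the explicit symbol $q$ and the localization constants is what makes the bound in Theorem~\ref{thm:boundN} trackable.
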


This result can be considered as the complex variables analogue of Hilbert's
17th problem:
given a multivariate polynomial that takes only non-negative values
over the reals, can it be represented as a sum of squares of rational
functions? The positive answer to this original question was given by 
Artin in $1926$ \cite{Ar}.  For a survey of recent work on the 
hermitian case see the review paper by D'Angelo \cite{Da}.

In this paper we give the following quantitative version of Theorem \ref{t:cdq}:
\begin{theorem}\label{thm:boundN}
Let $ f $ satisfy the assumptions of Theorem \ref{t:cdq} and 
define
\begin{equation}\label{eq:lambda}
\lambda \left( f \right) := \min_{ \| z \| = 1 } f \left( z , \bar z \right) , \ \  
\Lambda  \left( f \right) := \disp{ \left( \sum_{|\az|=|\beta|=m} \left(
      \dfrac{\az ! \beta ! }{m!^2} \right) |c_{\az \beta}
    |^2 \right)^{1/2}}. \end{equation}
Then there exists a universal constant $ C $ such that \eqref{eq:cdq} holds for 
\begin{equation}\label{eq:boundN}
N \geq C \, \frac{\Lambda(f)}{ \lambda(f)} (m+n)^3\log^3 n .
\end{equation}
\end{theorem}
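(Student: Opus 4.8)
The plan is to recast the sum‑of‑squares problem as the positivity of a single operator on the space of homogeneous polynomials of degree $N+m$, and to treat that positivity by Berezin--Toeplitz / semiclassical analysis on $\mathbb{CP}^{n-1}$, with $h=1/(N+m)$ as the small parameter. Concretely: for each $p$ let $V_p$ be the space of degree-$p$ homogeneous holomorphic polynomials on $\mathbb{C}^n$, equipped with the $L^2(S^{2n-1})$ (equivalently Bargmann--Fock) inner product, so that $\{z^\alpha/\sqrt{\alpha!}\}$ is orthonormal. Writing $\|z\|^{2N}f=\sum_{|\alpha|=|\beta|=N+m}C_{\alpha\beta}z^\alpha\bar z^\beta$, the identity \eqref{eq:cdq} holds iff the Hermitian matrix $(C_{\alpha\beta})$ is positive semidefinite, iff the operator $A_N$ on $V_{N+m}$ whose orthonormal-basis matrix is $(\sqrt{\alpha!\beta!}\,C_{\alpha\beta})$ is $\ge 0$. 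A direct computation — multiplying a bi-homogeneous form by $\|z\|^2$ corresponds, at the level of these operators, to the completely positive raising map $B\mapsto\sum_{j=1}^n a_j^\dagger B a_j$ (with $a_j^\dagger$ multiplication by $z_j$), so that $N$ iterations land one at level $N+m$ — identifies $A_N$, up to the positive scalar $(N+m)!$, with the Berezin--Toeplitz operator $T_{N+m}(\sigma_N)$ on $H^0(\mathbb{CP}^{n-1},\mathcal{O}(N+m))\cong V_{N+m}$ whose symbol $\sigma_N$ is characterized by $B_{N+m}\sigma_N=\tilde f$, where $B_{N+m}$ is the Berezin transform at level $N+m$ and $\tilde f:=f/\|z\|^{2m}$ is a smooth function on $\mathbb{CP}^{n-1}$ with $\min\tilde f=\lambda(f)>0$. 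Since the Toeplitz operator of a nonnegative symbol is nonnegative, it then suffices to prove that $\sigma_N\ge 0$.

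\textbf{The semiclassical estimate (the main obstacle).} To control $\sigma_N$ I would use the asymptotic expansion $B_p=\mathrm{Id}+p^{-1}L_1+p^{-2}L_2+\cdots$ of the Berezin transform in the Fubini--Study Laplacian and its iterates, which gives $\sigma_N=\tilde f-(N+m)^{-1}L_1\tilde f+\cdots$ and hence a lower bound $\sigma_N\ge\lambda(f)-E_N$ with $E_N$ controlled by $h=1/(N+m)$ times finitely many derivatives of $\tilde f$. The delicate point, and the reason the final bound carries powers of $m+n$ together with a logarithm, is that $B_p^{-1}$ is unbounded and the base $\mathbb{CP}^{n-1}$ has dimension growing with $n$: the classical Bordemann--Meinrenken--Schlichenmaier and Borthwick--Uribe asymptotics are stated for a fixed manifold as $p\to\infty$, whereas here one must re-derive the expansion of the Berezin transform together with the off-diagonal Gaussian decay of the Bergman kernel of $\mathcal{O}(p)$ and the size of the remainder, with constants that are uniform — in fact polynomial, with a logarithmic loss coming from covering and sub-exponential-volume estimates — in the dimension. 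I expect this dimension-uniform semiclassical analysis to be the main technical obstacle.

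\textbf{Controlling the symbol, and conclusion.} Finally one must bound the derivative norms of $\tilde f$ entering $E_N$ in terms of $\Lambda(f)$. The Cauchy--Schwarz estimate combined with $\sum_{|\alpha|=m}\frac{m!}{\alpha!}|z^\alpha|^2=\|z\|^{2m}$ gives $\max_{S^{2n-1}}|f|\le\Lambda(f)$, hence $\|\tilde f\|_{C^0}\le\Lambda(f)$; and because $\tilde f$ is the restriction to $S^{2n-1}$ of a polynomial of bidegree $(m,m)$, a Bernstein--Markov inequality on the sphere bounds its derivatives of any fixed order by $(m+n)^{O(1)}\Lambda(f)$. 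Combining this with the previous step yields $\sigma_N\ge\lambda(f)-C\,\Lambda(f)\,\bigl((m+n)\log n\bigr)^{3}/(N+m)$, which is $\ge 0$ as soon as $N$ satisfies \eqref{eq:boundN}; then $A_N\ge 0$ and \eqref{eq:cdq} follows.
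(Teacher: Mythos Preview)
Your overall framework---reduce to positivity of a Toeplitz operator on $V_{N+m}$ and then control the symbol semiclassically with $h\sim 1/N$---is exactly the paper's strategy. The difference is in implementation, and the paper's choices let it bypass precisely the obstacle you flag as central.

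First, the paper works in the Bargmann--Fock space on $\mathbb{C}^n$ rather than on $\mathbb{CP}^{n-1}$. The operator $P_f$ is written as $\Pi_\Phi\, q\, \Pi_\Phi$ with the symbol $q$ given by the \emph{exact finite} formula
\[
q(z,\bar z)=\sum_{j=0}^{m}\frac{h^j}{j!}\Bigl(-\tfrac14\Delta\Bigr)^j f(z,\bar z),
\]
which terminates because $f$ is a polynomial of bidegree $(m,m)$. This is the analogue of your $\sigma_N$, but no asymptotic inversion of a Berezin transform is needed; hence there is no remainder to control uniformly in the dimension, and your ``main technical obstacle'' simply does not arise. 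The role of your Bernstein--Markov step is played by the elementary recursion $\Lambda(\tfrac14\Delta f)\le nm^2\,\Lambda(f)$, giving $\max_{\|z\|=1}\bigl|(\tfrac14\Delta)^j f\bigr|\le (nm^2)^j\Lambda(f)$.

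Second, the paper does \emph{not} try to make the symbol nonnegative pointwise. On $\mathbb{C}^n$ this would fail away from the unit sphere. Instead it uses that any $u\in\mathcal{P}_{N+m}$, in the weighted $L^2$ norm with $h=1/N$, is exponentially concentrated on a thin annulus $\{1-\epsilon\le\|z\|^2\le 1+\epsilon\}$; the quantitative localization (essentially a Laplace/stationary-phase estimate for $\int t^{\rho}e^{-\rho t}\,dt$ over $t\notin[1\pm\delta]$) is where the powers of $m+n$ and the $\log n$ enter, after optimizing $\epsilon=h^{1/3}$. The final balance is between the leading term $\lambda(f)(1-2\epsilon)^m$ and error terms of size $\Lambda(f)\,nm^2 h$ and $\Lambda(f)\,h^{-O(n)}e^{-c\,h^{-1/3}}$.

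In short: your plan is on the right track but takes the harder road. If you replace ``invert the Berezin transform asymptotically with dimension-uniform remainder'' by ``use the exact polynomial symbol and a concentration estimate for homogeneous polynomials near the sphere,'' you recover the paper's argument and the stated bound without the step you were worried about.
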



The proofs of Quillen \cite{Quillen} and Catlin-D'Angelo \cite{Cad} are
based on functional analytic methods related to the study of Toeplitz
operators. The existence of $ N_0 $ such that \eqref{eq:cdq} is satisfied is obtained by a non-constructive Fredholm compactness argument  -- see \cite[Section
10]{Puti} for outlines and comparisons of the two proofs, and also
\cite{Da1} for an elementary introduction to the subject.

Here we take a point of view based on the semiclassical study of 
Toeplitz operators -- see \cite[Chapter 13]{Maciej:semi} and
references given there. Our proof of Theorem \ref{thm:boundN} is a quantitative 
version of the proof of Theorem \ref{t:cdq} given in \cite[Section
13.5.4]{Maciej:semi}:  the compactness argument
is replaced by an asymptotic argument with $ N = 1/h $, where 
$ h $ is the semiclassical parameter. The symbol calculus for
Toeplitz operators allows estimates in terms of $ h $ which 
then translate into a bound on $ N$.\\

Better bounds on $ N $ obtained using purely algebraic methods 
already exist and it is an interesting question if such bounds can 
be obtained using semiclassical methods.

In the diagonal (real) case, $ c_{\alpha \beta } = 0 $ if $ \alpha
\neq \beta$, Theorem \ref{t:cdq} is equivalent to a classical theorem of
P\'olya -- see \cite[Section 10.1]{Puti}. In that case a sharp bound on $ N $ was given by Powers and Resnick \cite{PR}:
\begin{equation}\label{eq:PRN}
N > \frac{ m \left( m -1\right)} 2 \dfrac{ \widetilde{\Lambda }\left( f \right) }{ \lambda
  \left( f \right) } - m , 
\ \ \ 
\disp{\widetilde{ \Lambda} \left(f\right) :=  \max_{|\alpha | = m} \left\{ \dfrac{\alpha!}{ m!} |c_{\alpha  \alpha}| \right\}}.
\end{equation} 
It is remarkable that the bound does not depend on the dimension $n$.
To compare this bound to the bound obtained using semiclassical
methods,  we note that 
in the diagonal case, the spectral radius 
used in Lemma \eqref{lem:reg}
is given by $ \widetilde \Lambda ( f ) $.  Hence an easy modification of that lemma leads to the bound
\begin{equation}
N \gtrsim \disp{ \dfrac{\tilde{\Lambda}(f)}{\lambda\left(f\right)} (n+m)^3 \log^3 n},
\end{equation}
which is weaker than the bound \eqref{eq:PRN} from \cite{PR},  roughly
by  a factor of  $ m ( 1 +  n/ m  )^3 $.

In the complex case, To-Yeung \cite[Theorem 1]{ToYe} gave an algebraic 
proof of a better bound than the one provided by our method
in Theorem \ref{thm:boundN}.  They show that 
\[  N \geq n m ( 2m - 1) \frac{ \Lambda^\sharp ( f ) }{ \log 2 \, \lambda
  ( f ) } - n - m , \ \ \ \Lambda^\sharp ( f ) := \sup_{ |z|=1} | f (
z , \bar z ) | . \]

The common feature of all these bounds is the denominator $ \lambda ( f )
$ and the standard example of $ |z_1|^4 + |z_2|^4 - c |z_1|^2 |z_2|^2
$ , $ 0 < c < 2 $, $ z \in \C^2 $ (see for instance \cite[Section
13.5.4]{Maciej:semi}) shows that the $ 1 / \lambda ( f ) $ behaviour
is optimal.

In Putinar's generalization of P\'olya's theorem \cite{Put1}, a much
larger bound was given by Nie and Schweighofer \cite{NS}:
\begin{equation}\label{eq:NSN}
N > c \exp{\left( m^2 n^m \dfrac{\tilde{\Lambda(f)}}{\lambda(f)} \right)^c},
\end{equation}
for some $ c > 0 $.

\medskip

The paper is organized as follows. In Section \ref{preli} we recall
various basic facts about the Bargmann-Fock space and Toeplitz
quantization. Section \ref{basi} presents the basic inequality 
which leads to a bound on $ N$. Section \ref{estE} provides
quantitative estimates on the localization of homogeneous polynomials
in the Bargmann-Fock space, with a stationary phase argument
given in the appendix. The proof of Theorem 
\ref{thm:boundN} is then given in Section \ref{proof}. 

\medskip

\noindent
{\sc Notation.} 
We denote $\lr{x,y}$ for $x,y \in \C^n$ the euclidean quadratic form on $\C^n$ (not the hermitian scalar product):
$ \lr{x,y} := { \sum_{i=1}^n x_i y_i }. $
For $z = \left(z_1, ..., z_n\right) \in \C^n$ we define $\| z \|$ as the standard hermitian norm:
$\| z \|^2 := { \sum_{i=1}^n z_i \overline{z_i}  = \lr{z, \oz}}$.
The measure $dm(z)$ denotes the $2n$-dimensional Lebesgue measure on $\C^n$.
The space of homogeneous polynomials of degree $M$ is denoted
$\PP_M$. 
Finally, for two quantities $A$, $B$, we write $A \gtrsim B$, if there exist a (large, universal) constant $C$, such that $A \geq C B$. 

\medskip
\noindent
{\sc Acknowledgments.} We would like to thank Mihai Putinar for
encouraging us to pursue this project and John D'Angelo for providing
useful comments and and references, especially reference \cite{ToYe}. 
The partial support by 
the National Science
Foundation for partial support under the grant DMS-1201417 is also
gratefully
acknowledged.

\section{ Preliminaries: Bargmann-Fock space and Toeplitz quantization}
\label{preli}

Quillen's original proof of Theorem \ref{t:cdq} used the Bargmann-Fock
space -- see \cite[Section 10]{Puti},\cite{Quillen} and \cite[Section
13.5.4]{Maciej:semi}.  We modify it slightly by introducing a
semiclassical parameter $ h$ and considering the subpace of homogeneous
polynomials of degree $M $, $ {\mathcal P}_M$.

A Hilbert space {\em Bargmann-Fock} norm on $ {\mathcal P}_M $ is
given by 
\begin{equation*}
\| u \|_{\PP_M}^2 = \disp{ \int_{\C^n} |u(z )|^2 e^{-\|z\|^2/h} dm\left(z\right) }
\end{equation*}
and we can extend this norm to any function $u$ such that 
\begin{equation*}
\disp{ \int_{\C^n} |u\left(z,\oz\right)|^2 e^{-\|z\|^2/h} dm\left(z\right) < \infty}.
\end{equation*}
We denote the resulting space by  $\LD$. 
The closed subspace of holomorphic functions is denoted by $H_\Phi$.
The measure $\exp(-\|z \|^2/h) dm ( z ) $ will sometimes be written as
$dG(z)$.

The Bergman projector $\Pi_\Phi$, is the orthogonal projector $ \LD \to
H_\Phi$ and 
to compute it we identify an orthonormal basis of $H_\Phi$. The
following standard lemma is a rephrasing of \cite[Theorem 13.16]{Maciej:semi}:

\begin{lem}
Let us define
\begin{equation}\label{eq:falpha}   f_\alpha \left( z \right) :=  \frac{ 1 } { \left( \pi h \right)^{n/2} } \left( \frac{ 1 } { h^{|\alpha|}  { \alpha !}
  }\right)^{1/2} z^\alpha.
\end{equation}
Then 
\begin{enumerate}
\item[$\left(i\right)$] The set of $ f_\alpha $'s is an othonormal basis on $H_\Phi$.
\item[$\left(ii\right)$] The Bergman projector $\Pi_\Phi$ can be written
\begin{equation*}
\Pi_\Phi u\left(z\right) = \disp{\int_{\C^n} \Pi\left(z,w\right) u\left(w\right) dm\left(w\right)}
\end{equation*}
where
\begin{equation*}
\Pi\left(z,w\right) := \dfrac{1}{\left(\pi h\right)^n} \exp \left( \dfrac{1}{h}\left(\lr{z,\overline{w}} - |w|^2\right) \right).\\
\end{equation*}
\end{enumerate}
\end{lem}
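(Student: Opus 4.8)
The plan is to reduce both parts to one Gaussian moment computation together with the exponential series. For the orthonormality in $(i)$, I would compute $\langle f_\alpha,f_\beta\rangle_{\LD}=\int_{\C^n}f_\alpha(z)\overline{f_\beta(z)}\,e^{-\|z\|^2/h}\,dm(z)$ directly. By Fubini the Gaussian integral $\int_{\C^n}z^\alpha\bar z^\beta e^{-\|z\|^2/h}\,dm(z)$ factors as $\prod_{j=1}^n\int_{\C}\zeta^{\alpha_j}\bar\zeta^{\beta_j}e^{-|\zeta|^2/h}\,dm(\zeta)$; passing to polar coordinates, the angular integral vanishes unless $\alpha_j=\beta_j$ and the radial one is a Gamma integral, so each factor equals $\delta_{\alpha_j\beta_j}\,\pi\,h^{\alpha_j+1}\alpha_j!$. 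Multiplying over $j$ gives $\int_{\C^n}z^\alpha\bar z^\beta e^{-\|z\|^2/h}\,dm(z)=\delta_{\alpha\beta}(\pi h)^n h^{|\alpha|}\alpha!$, and the normalising constants in \eqref{eq:falpha} are chosen precisely so that $\langle f_\alpha,f_\beta\rangle_{\LD}=\delta_{\alpha\beta}$.

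For completeness, I would show that a function $u\in H_\Phi$ with $\langle u,f_\alpha\rangle_{\LD}=0$ for every $\alpha$ must vanish. Since $u$ is holomorphic, it has a Taylor expansion $u(z)=\sum_\beta a_\beta z^\beta$ converging locally uniformly on $\C^n$. By Cauchy--Schwarz $u\,\bar z^\alpha e^{-\|z\|^2/h}\in L^1(\C^n)$, so $\langle u,f_\alpha\rangle_{\LD}$ equals the limit as $R\to\infty$ of the corresponding integral over the ball $\{\|z\|<R\}$; on that ball the Taylor series converges uniformly, and since both the ball and the weight $e^{-\|z\|^2/h}$ are invariant under the torus action $z_j\mapsto e^{i\theta_j}z_j$, integrating term by term annihilates every term except $\beta=\alpha$ and forces $a_\alpha=0$ in the limit. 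Hence $\{f_\alpha\}$ is an orthonormal basis of $H_\Phi$.

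For $(ii)$, once $\{f_\alpha\}$ is an orthonormal basis we have $\Pi_\Phi u=\sum_\alpha\langle u,f_\alpha\rangle_{\LD}\,f_\alpha$. Writing out the inner product and noting that $\sum_\alpha|f_\alpha(z)|^2=(\pi h)^{-n}e^{\|z\|^2/h}$ is finite pointwise while $(\langle u,f_\alpha\rangle_{\LD})_\alpha\in\ell^2$, Cauchy--Schwarz and dominated convergence justify exchanging the sum and the integral, so $\Pi_\Phi$ has the integral kernel $\Pi(z,w)=e^{-\|w\|^2/h}\sum_\alpha f_\alpha(z)\overline{f_\alpha(w)}$ with respect to $dm$. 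Finally
\[
\sum_\alpha f_\alpha(z)\overline{f_\alpha(w)}=\frac{1}{(\pi h)^n}\sum_\alpha\frac{1}{\alpha!}\prod_{j=1}^n\Big(\frac{z_j\overline{w_j}}{h}\Big)^{\alpha_j}=\frac{1}{(\pi h)^n}\prod_{j=1}^n e^{z_j\overline{w_j}/h}=\frac{1}{(\pi h)^n}e^{\lr{z,\overline{w}}/h},
\]
by the exponential series, and multiplying by $e^{-\|w\|^2/h}$ yields the stated formula for $\Pi(z,w)$. The computations here — the Gaussian moment and the exponential series — are routine; the only genuine analytic points, and hence the main obstacle (such as it is), are the limit interchanges (term-by-term integration over balls, and the swap of sum and integral in the projector formula) and, if not taken for granted, the closedness of $H_\Phi$ inside $\LD$. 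Since the lemma is \cite[Theorem 13.16]{Maciej:semi} rephrased, these may equally well be quoted rather than reproved.
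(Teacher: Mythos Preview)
The paper does not actually prove this lemma: it merely states it as ``a rephrasing of \cite[Theorem 13.16]{Maciej:semi}'' and moves on. Your direct computation (Gaussian moments in polar coordinates for orthonormality, a torus-invariance/Taylor argument for completeness, and summing the exponential series for the reproducing kernel) is correct and is the standard proof one finds in that reference, so there is nothing to compare against and nothing to fix.
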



To connect the study of positive bi-homogeneous forms to Bargmann-Fock
space, we recall
the standard result (see \cite[Lemma 13.17]{Maciej:semi}):

\begin{lem}\label{Sums}
A bi-homogeneous form of degree $m $ can be written 
as a sum of squares of homogeneous polynomials, 
\begin{equation*}
 f \left( z, \bar z \right) = \sum_{ j =1}^k | P_j \left( z \right) |^2 , \ \ 
P_j \left( z\right) = \sum_{ | \alpha | = m } p_\alpha^j z^\alpha ,
\end{equation*}
if and only if the matrix $ \left( c_{\alpha \beta } \right)_{|\az|=|\beta|=m} $ is positive
semidefinite.
\end{lem}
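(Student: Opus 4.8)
The plan is to read the lemma as a dictionary between bi-homogeneous forms of degree $m$ and Hermitian matrices indexed by multi-indices of length $m$, and to match a sum-of-squares decomposition of $f$ with a rank-one decomposition of $C := (c_{\alpha\beta})_{|\alpha|=|\beta|=m}$. The one preliminary I would record first is that this dictionary is well defined: the monomials $\{ z^\alpha \bar z^\beta : |\alpha|=|\beta|=m\}$ are linearly independent as functions on $\C^n$ (treat $z_j,\bar z_j$ as independent real-linear coordinates and compare Taylor coefficients at the origin), so the coefficients $c_{\alpha\beta}$ in \eqref{eq:deff} are uniquely determined by $f$. Since here $f$ is real-valued, comparing $f$ with $\overline f$ and relabelling $\alpha \leftrightarrow \beta$ gives $c_{\alpha\beta} = \overline{c_{\beta\alpha}}$, so $C$ is Hermitian and positive semidefiniteness is a meaningful hypothesis.

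For the ``only if'' direction, given $f(z,\bar z)=\sum_{j=1}^k |P_j(z)|^2$ with $P_j(z)=\sum_{|\alpha|=m} p_\alpha^j z^\alpha$, I would expand each square,
\[ |P_j(z)|^2 = \sum_{|\alpha|=|\beta|=m} p_\alpha^j \overline{p_\beta^j}\, z^\alpha \bar z^\beta , \]
(only indices with $|\alpha|=|\beta|=m$ occur because $P_j$ is homogeneous of degree $m$), sum over $j$, and invoke the uniqueness above to read off $c_{\alpha\beta} = \sum_{j=1}^k p_\alpha^j \overline{p_\beta^j}$. Writing $p^j := (p_\alpha^j)_{|\alpha|=m}$ this is exactly $C = \sum_{j=1}^k p^j (p^j)^\ast$, a sum of Hermitian nonnegative rank-one matrices, hence positive semidefinite.

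For the ``if'' direction I would run the same computation backwards. If $C$ is positive semidefinite, the spectral theorem (or any Gram/Cholesky factorization) gives $C = \sum_{j=1}^k w^j (w^j)^\ast$ for vectors $w^j = (w_\alpha^j)_{|\alpha|=m}$, with $k$ the rank of $C$ (e.g. $w^j = \sqrt{\mu_j}\, v^j$ for eigenpairs $(\mu_j,v^j)$, $\mu_j\ge 0$). Setting $P_j(z) := \sum_{|\alpha|=m} w_\alpha^j z^\alpha$, the expansion above yields
\[ \sum_{j=1}^k |P_j(z)|^2 = \sum_{|\alpha|=|\beta|=m}\Bigl(\textstyle\sum_{j} w_\alpha^j \overline{w_\beta^j}\Bigr) z^\alpha \bar z^\beta = \sum_{|\alpha|=|\beta|=m} c_{\alpha\beta}\, z^\alpha \bar z^\beta = f(z,\bar z). \]
I do not expect a genuine obstacle: the whole content is the elementary equivalence ``a Hermitian matrix is positive semidefinite iff it is a sum of rank-one matrices $w w^\ast$'', transported through the coefficient map $f \mapsto (c_{\alpha\beta})$; the only point requiring a line of justification is the injectivity of that map, i.e. the linear independence of the monomials $z^\alpha\bar z^\beta$.
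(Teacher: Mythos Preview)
Your proof is correct and is exactly the standard argument. The paper itself does not supply a proof of this lemma; it simply records it as a standard fact and cites \cite[Lemma 13.17]{Maciej:semi}, so there is nothing in the paper to compare your argument against beyond noting that you have reproduced the classical proof in full.
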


Thus to prove Theorem \ref{t:cdq} we need  to show that the matrix of the hermitian form $\lr{z,\oz}^N f\left(z,\oz\right)$ is positive for $N$ large enough. Let us compute this matrix. Since 
\begin{equation*} 
\frac{ \langle z , \bar z \rangle^N } {N!} = \sum_{| \mu | = N }
\frac{ z^\mu \bar z ^\mu}{ \mu!}, 
\end{equation*}
\begin{align*}    
 \lr{ z , \bar z }^N f \left( z, \bar z \right) & = 
 \sum_{ | \alpha| = |\beta| = m\atop | \mu | = N } \frac{ c_{\alpha \beta} } {
  \mu!}   z^{ \alpha + \mu } \bar z^{ \beta + \mu } 
 = \sum_{ |\gamma| =
  |\rho| = m + N } c_{\gamma  \rho}^N z^{\gamma } \bar z ^{\rho},
\end{align*}
where
\begin{equation}
\label{eq:cabN}
c_{\rho \gamma}^N = { \sum_{\substack{{ \az+\mu = \rho}  \\
      {\beta+\mu = \gamma, |\mu|=N}}}  \frac{ {c_{\az \beta}}} {{\mu!}
  }}  ,  \ \ | \rho | = | \gamma | = N + m .  
\end{equation}
The following essential idea comes from the work of Quillen in
\cite{Quillen}. It relates the positivity of the matrix
\eqref{eq:cabN}  to the positivity of a differential operator. 

 Let $P_f$ be the following differential operator
\begin{equation}\label{eq:Qu}
P_f = \disp{\sum_{ |\alpha| = |\beta | = m } c_{\alpha \beta } z^\alpha \left(
h \partial_z\right)^\beta} : H_\Phi \longrightarrow H_\Phi .
\end{equation}
Since $f$ is real, $\overline{c_{\az\beta}} = c_{\beta \az}$. Thus the
formula \eqref{eq:repr2} shows that $P_f$ is self adjoint. Let us
explain now how the positivity condition and the operator $P_f$ are
related. 

A simple calculation (see \cite[Section 13.5.5]{Maciej:semi}) based on the definition and \eqref{eq:repr1}
shows that for all $u,v \in \PP_{m+N}$, 
\begin{equation*}
\lr{P_f u, v}_{\PP_{m+N}} = \pi^n h^{n+N+2m} \disp{ \sum_{|\gamma|=|\rho|=m+N} \rho! \gamma! c_{\rho \gamma}^N u_\gamma \overline{v}_\rho }
\end{equation*}
where $u_\rho \in \C$, $v_\gamma \in \C$, are given in 
\begin{equation*}
\disp{ u = \sum_{|\rho| = m+N} u_{\rho}z^\rho, \ \ v = \sum_{|\gamma| = m+N} v_{\gamma}z^\gamma }.
\end{equation*}
Thus proving that the matrix  \eqref{eq:cabN} is positive definite is 
equivalent to proving that $P_f$ is a positive operator on
$\PP_{m+N}$.  To make this quantitative we use the 
following lemma which is an application of a more general formula given 
in \cite[Theorem 13.10]{Maciej:semi}:
\begin{lem}
Let $\Pi_\Phi$ be the orthogonal projector from $\LD$ to $H_\Phi$. Then
\begin{equation}\label{eq:repr1}
P_f |_{P_{m+N}} = \disp{\sum_{ |\alpha| = |\beta | = m } c_{\alpha \beta } z^\alpha \Pi_\Phi\left(\oz^\beta \cdot \right), }
\end{equation}
and
\begin{equation}\label{eq:repr2}
P_f |_{P_{m+N}} = \Pi_{\Phi} q\left(z,\oz\right) \Pi_{\Phi}
\end{equation}
where
\begin{equation}\label{eq:defq}
q\left(z,\oz\right) 
= {\sum_{j=0}^m \dfrac{h^j}{j!} \left(-\textstyle{\frac{1}{4}} \Delta \right)^j f\left(z,\oz\right)}.
\end{equation}
\end{lem}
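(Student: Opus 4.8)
The plan is to prove the two displayed identities in turn: \eqref{eq:repr1} is the standard Bargmann--Fock fact that the annihilation operator $h\partial_{z_j}$ acts on holomorphic functions as multiplication by $\oz_j$ followed by the Bergman projection, and \eqref{eq:repr2} is the passage from the normally ordered (Wick) operator $P_f$ to the Toeplitz (anti-Wick) operator $\Pi_\Phi q\Pi_\Phi$, where the Laplacian corrections in $q$ originate. For \eqref{eq:repr1} the elementary input is that $M_{z_j}^\ast = h\partial_{z_j}$ on $H_\Phi$, which follows by integration by parts from $z_j\,e^{-\|z\|^2/h} = -\,h\,\partial_{\oz_j}e^{-\|z\|^2/h}$ and $\partial_{\oz_j}u=0$ for holomorphic $u$:
\begin{equation*}
\int_{\C^n} z_j\, u\,\overline{v}\; dG(z) \;=\; h\int_{\C^n} u\;\overline{\partial_{z_j}v}\; dG(z), \qquad u,v\in H_\Phi .
\end{equation*}
Hence, for $u,v\in H_\Phi$, $\lr{\Pi_\Phi(\oz_j u),v}_{\LD}=\lr{\oz_j u,v}_{\LD}=\lr{u,z_j v}_{\LD}=\lr{h\partial_{z_j}u,v}_{\LD}$, so $\Pi_\Phi(\oz_j u)=h\partial_{z_j}u$ on $H_\Phi$. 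To iterate over a multi-index $\be$ I would use $\Pi_\Phi M_{\oz^\be}(\Id-\Pi_\Phi)=0$: if $w\perp H_\Phi$ and $v\in H_\Phi$ then $\lr{\oz^\be w,v}_{\LD}=\lr{w,z^\be v}_{\LD}=0$ since $z^\be v\in H_\Phi$. Combining these and inducting on $|\be|$ gives $(h\partial_z)^\be u=\Pi_\Phi(\oz^\be u)$ for every polynomial $u$, and substituting into \eqref{eq:Qu} yields \eqref{eq:repr1}; in particular $P_f$ maps $\PP_{m+N}$ into itself, as $z^\az$ raises the degree by $m$ and $(h\partial_z)^\be$ lowers it by $m$.

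For \eqref{eq:repr2}, note first that by \eqref{eq:repr1} we have $P_f|_{\PP_{m+N}}=\sum_{|\az|=|\be|=m}c_{\az\be}\,M_{z^\az}\Pi_\Phi M_{\oz^\be}$; since multiplication by the holomorphic monomial $z^\az$ preserves $H_\Phi$, each term already takes values in $H_\Phi$, so $P_f|_{\PP_{m+N}}=\Pi_\Phi A\Pi_\Phi$ with $A=\sum c_{\az\be}M_{z^\az}\Pi_\Phi M_{\oz^\be}$, i.e. $f$ is precisely the normal symbol of $P_f$. The general formula of \cite[Theorem 13.10]{Maciej:semi} states that a normally ordered operator with symbol $a(z,\oz)$ equals the Toeplitz operator $\Pi_\Phi\,\widetilde a\,\Pi_\Phi$ with symbol $\widetilde a=e^{-h(\partial_z\cdot\partial_{\oz})}a$, where $\partial_z\cdot\partial_{\oz}:=\sum_k\partial_{z_k}\partial_{\oz_k}=\tfrac14\Delta$; the mechanism is that moving a factor $\Pi_\Phi M_{\oz_k}$ to the right past an $M_{z_j}$ is governed by the canonical commutation relation $[\,h\partial_{z_j},M_{z_k}\,]=h\,\delta_{jk}$ on $H_\Phi$, and each such contraction contributes one factor $-h\,\partial_{z_k}\partial_{\oz_k}$ applied to the symbol, the multinomial bookkeeping assembling these into the exponential. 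Applying this with $a=f$ and using that $f$ is bi-homogeneous of degree $(m,m)$, so that $(\partial_z\cdot\partial_{\oz})^jf\equiv 0$ for $j>m$, the series truncates and gives exactly $q(z,\oz)=\sum_{j=0}^m\frac{h^j}{j!}(-\tfrac14\Delta)^j f(z,\oz)$ of \eqref{eq:defq}. (Then self-adjointness of $P_f$ follows from $\overline{c_{\az\be}}=c_{\be\az}$, which makes $q$ real, together with \eqref{eq:repr2}.)

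The step I expect to be the main obstacle is this last reordering --- producing the precise coefficient $h^j/j!$ of $(-\tfrac14\Delta)^jf$ rather than some other function of $j$ and $m$. Should one prefer a self-contained argument to citing \cite[Theorem 13.10]{Maciej:semi}, the cleanest route is to check \eqref{eq:repr2} on the monomial basis $z^\gamma$, $|\gamma|=m+N$: the left-hand side is $\sum_{\az,\be}c_{\az\be}\,h^{|\be|}\frac{\gamma!}{(\gamma-\be)!}\,z^{\gamma-\be+\az}$ by direct differentiation, while the right-hand side is computed using the extension of the identity above, $\Pi_\Phi(\oz^\be z^\delta)=h^{|\be|}\frac{\delta!}{(\delta-\be)!}z^{\delta-\be}$ (vanishing unless $\delta\ge\be$ componentwise), applied to every monomial occurring in $q(z,\oz)z^\gamma$. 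Matching the two sides then reduces to a purely combinatorial identity in the multi-indices, every ingredient of which is already available; this identity is the only genuinely computational point, all else --- self-adjointness, invariance of $\PP_{m+N}$, and truncation of the series --- being immediate.
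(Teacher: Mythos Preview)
Your proposal is correct and follows essentially the same route as the paper: the paper offers no self-contained proof of this lemma, simply stating that it ``is an application of a more general formula given in \cite[Theorem 13.10]{Maciej:semi},'' which is exactly the result you invoke for \eqref{eq:repr2}, and your argument for \eqref{eq:repr1} via $M_{z_j}^\ast=h\partial_{z_j}$ and $\Pi_\Phi M_{\oz^\beta}(\Id-\Pi_\Phi)=0$ is the standard one. Your write-up in fact supplies more detail than the paper itself --- the integration-by-parts identity, the induction on $|\beta|$, the identification $\partial_z\!\cdot\!\partial_{\oz}=\tfrac14\Delta$, and the truncation argument --- all of which are correct.
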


Using \eqref{eq:repr2}, positivity of  $ P_f $  on $ {\mathcal
  P}_{N+m} $ follows from inequality 
\begin{equation*}
\lr{ \Pi_\Phi q \Pi_{\Phi}u, u }_{\PP_{m+N}} \geq c \| u \|^2_{\LD}, \
\ \  u \in {\mathcal P}_{N+m} , 
\end{equation*}
for some constant $ c > 0 $.
But since $\Pi_\Phi u = u$ and $\Pi_\Phi^* = \Pi_\Phi$, it suffices to
prove that for all $u \in \PP_{N+m}$,
with $\LD$-norm equal to $1$, 
\begin{equation}\label{fund}
\lr{ q(z,\oz) u, u }_{\LD} \geq c , 
\end{equation}
and \eqref{fund} is the starting point of our work.

\section{The basic estimate}
\label{basi}


We define the ring
 $\Oe$ as 
\begin{equation*}
\disp{ \Oe := \{ z \in \C^n, 1-\epsi \leq \| z \|^2 \leq 1+\epsi \}. }
\end{equation*} 
For $u \in \PP_{N+m}$ with $\LD$-norm equal to $1$ we have
\begin{align*}
\lr{qu,u}_{L^2_\Phi}& = \disp{ \int_{\C^n} q\left(z,\oz\right) |u\left(z\right)|^2e^{-\|z\|^2/h} dm\left(z\right) }\\
 & = \disp{ \int_{\C^n \setminus \Oe} q\left(z,\oz\right) |u\left(z\right)|^2
  e^{-\|z\|^2/h} dm\left(z\right) }  
+ {\int_{\Oe} q\left(z,\oz\right) |u\left(z\right)|^2e^{-\|z\|^2/h} dm\left(z\right)}\\
 & \geq \disp{\min_{\C^n \setminus \Oe} q \left(\| u \|_{\LD}-\|u\|_{\LD\left(\Oe\right)}^2\right) + \int_{\Oe} q\left(z,\oz\right) |u\left(z\right)|^2e^{-\|z\|^2/h} dm\left(z\right)}\\
 & =  \disp{ \min_{\C^n \setminus \Oe} q \left(1-\|u\|_{\LD\left(\Oe\right)}^2\right) + \lr{qu,u}_{\LD(\Oe)} }.
\end{align*}
Recalling \eqref{eq:defq} we see that
\begin{equation}
\label{eq:0}
\begin{split}
\lr{qu,u}_{\LD(\Oe)} & \disp{ = \sum_{j=0}^m \dfrac{h^j}{j!}
  \int_{\Oe} \dfrac{\left( - {\textstyle{\frac14} \Delta }\right)^j
    f\left(z,\oz\right)}{\| z \|^{2\left(m-j\right)}} |\| z \|^{m-j}
  u\left(z\right)|^2  e^{-\| z \|^2/h}  dm\left(z\right)   }\\
& \disp{ \geq - \sum_{j=0}^m \dfrac{h^j}{j!} \max_{\Oe} \left(
    \dfrac{1}{\| z \|^{2\left(m-j\right)}}\lv \left( 
{{\textstyle{\frac14}} \Delta} \right)^j f\left(z,\oz\right) \rv \right) \|\| z \|^{m-j} u\|_{\LD\left(\Oe\right)}^2 }\\
& \disp{ \geq - \sum_{j=0}^m \dfrac{h^j}{j!}
  E_\epsi\left(h,m+N,m-j\right) \max_{\| z \|=1} \lv\left( {{\textstyle{\frac14}} \Delta}  \right)^j f\left(z,\oz\right)\rv},
\end{split}
\end{equation}
where the quantity $E_\epsi(h,M,k)$ is defined as
\begin{equation}\label{eq:defE}
E_\epsi(h,M,k) := \disp{ \sup_{u \in \PP_M, \| u \|_{\PP_M} = 1} \| \|z\|^k u \|_{\LD(\Oe)} },
\end{equation}
and where we used the  homogeneity of $\Delta^j f$ of degree
$2\left(m-j\right)$. 

Rearranging the terms we obtain
\begin{equation}\label{eq:7}
\begin{split} 
\lr{qu,u}_{L^2_\Phi} \geq & \left(
    \left(1-E_\epsi\left(h,m+N,0\right)\right)\min_{\C^n \setminus \Oe} q
  \right)  
\\
 &  \ \ \ \ \ \ - \sum_{j=0}^m \dfrac{h^j}{j!} E_\epsi\left(h,m+N,m-j\right) \max_{\| z \|=1} \lv\left( {{\textstyle{\frac14}} \Delta} \right)^j f\left(z,\oz\right)\rv .
\end{split} \end{equation}
Moreover,
\begin{align*}
\disp{\min_{1-2\epsi \leq \| z \|^2\leq 1+2\epsi} q\left(z,\oz\right)} & = \disp{ \min_{1-2\epsi \leq \| z \|^2\leq 1+2\epsi} \sum_{j=0}^m \dfrac{h^j}{j!} \left( -{{\textstyle{\frac14}} \Delta} \right)^j f\left(z,\oz\right) }\\
& \geq \disp{ \left(1-2\epsi\right)^m \lambda\left(f\right) - \sum_{j=1}^m \dfrac{h^j}{j!} \left(1+2\epsi\right)^{m-j} \max_{\| z \|=1} \lv\left( {{\textstyle{\frac14}} \Delta} \right)^j f\left(z,\oz\right)\rv }.
\end{align*}
We see that we need an upper bound for $\disp{\max_{\| z \|=1}
  \lv\left( {{\textstyle{\frac14}} \Delta} \right)^j f\left(z,\oz\right)\rv }$
and that is given in the following
\begin{lem}\label{lem:reg} We have the estimate:
\begin{equation}
\label{eq:reg}
\disp{ \max_{\| z \|=1} \lv \left( {{\textstyle{\frac14}} \Delta} \right)^j f\left(z,\oz\right)\rv \leq \left(n m^2\right)^j \Lambda\left(f\right) },
\end{equation}
where $ \Lambda ( f ) $ is defined in \eqref{eq:lambda}.
\end{lem}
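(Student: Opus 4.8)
The plan is to bound, for an arbitrary bi-homogeneous form $g$ of degree $\ell$, the quantity $\max_{\|z\|=1}|g(z,\bar z)|$ by the weighted coefficient norm $\Lambda(g)$ appearing in \eqref{eq:lambda}, and then to track how $\Lambda$ changes under one application of $\tfrac14\Delta$. Writing the Wirtinger form $\tfrac14\Delta=\sum_{k=1}^n\partial_{z_k}\partial_{\bar z_k}$ and letting $e_k\in\N^n$ be the $k$-th coordinate vector, a one-line computation shows that if $g=\sum_{|\alpha|=|\beta|=\ell}c_{\alpha\beta}z^\alpha\bar z^\beta$ then $\tfrac14\Delta g$ is bi-homogeneous of degree $\ell-1$ with coefficients $\tilde c_{\alpha'\beta'}=\sum_{k=1}^n(\alpha'_k+1)(\beta'_k+1)\,c_{\alpha'+e_k,\,\beta'+e_k}$, $|\alpha'|=|\beta'|=\ell-1$. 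So the lemma will follow from two facts: (a) $\max_{\|z\|=1}|g(z,\bar z)|\le\Lambda(g)$; and (b) $\Lambda(\tfrac14\Delta g)\le\sqrt n\,\ell^2\,\Lambda(g)$.

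For (a), I would write $g(z,\bar z)=\sum_{\alpha,\beta}\bigl(\tfrac{\sqrt{\alpha!\beta!}}{\ell!}c_{\alpha\beta}\bigr)\bigl(\tfrac{\ell!}{\sqrt{\alpha!\beta!}}z^\alpha\bar z^\beta\bigr)$, apply Cauchy--Schwarz, and use the multinomial identity $\sum_{|\alpha|=\ell}\tfrac{\ell!}{\alpha!}|z^\alpha|^2=\|z\|^{2\ell}$, which equals $1$ on the unit sphere; this gives precisely the factor $\Lambda(g)$. For (b), I would apply Cauchy--Schwarz to the $n$-term sum defining $\tilde c_{\alpha'\beta'}$, insert the result into $\Lambda(\tfrac14\Delta g)^2=\sum_{\alpha',\beta'}\tfrac{\alpha'!\beta'!}{(\ell-1)!^2}|\tilde c_{\alpha'\beta'}|^2$, and in each summand substitute $\alpha=\alpha'+e_k$, $\beta=\beta'+e_k$. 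The identities $\alpha'!=\alpha!/\alpha_k$ and $\alpha'_k+1=\alpha_k$ make the weights collapse so that, after reindexing, $\Lambda(\tfrac14\Delta g)^2\le n\,\tfrac{\ell!^2}{(\ell-1)!^2}\sum_{\alpha,\beta}\tfrac{\alpha!\beta!}{\ell!^2}|c_{\alpha\beta}|^2\bigl(\sum_k\alpha_k\beta_k\bigr)$, and the elementary bound $\sum_k\alpha_k\beta_k\le(\sum_k\alpha_k)(\sum_k\beta_k)=\ell^2$ yields (b).

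To finish, I would iterate (b) starting from $f$ (degree $m$): $\Lambda((\tfrac14\Delta)^jf)\le n^{j/2}\bigl(m(m-1)\cdots(m-j+1)\bigr)^2\Lambda(f)\le n^{j/2}m^{2j}\Lambda(f)\le(nm^2)^j\Lambda(f)$ since $n\ge1$, and then apply (a) to $g=(\tfrac14\Delta)^jf$. The only real obstacle is the multi-index bookkeeping in the change of variables in (b) — one must match the weight $\alpha!/\ell!^2$ against the factor $(\alpha'_k+1)^2=\alpha_k^2$ produced by $\partial_{z_k}\partial_{\bar z_k}$ so that the sum telescopes; everything else is Cauchy--Schwarz. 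I note in passing that this argument in fact gives the slightly stronger constant $n^{j/2}m^{2j}$; the form $(nm^2)^j$ in \eqref{eq:reg} is all that is needed later.
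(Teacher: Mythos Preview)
Your proof is correct and follows essentially the same two–step strategy as the paper: first bound $\max_{\|z\|=1}|g|\le\Lambda(g)$, then establish a recursion $\Lambda(\tfrac14\Delta g)\le C_{n,\ell}\,\Lambda(g)$ and iterate. The paper obtains (a) by writing $|f|\le\langle\widetilde C R,R\rangle$ and bounding the spectral radius by the Hilbert–Schmidt norm, which is just your Cauchy--Schwarz argument in matrix language; for (b) the paper first crudely estimates $\sqrt{(\gamma_i+1)(\rho_i+1)}\le m$ and then counts preimages, getting $\Lambda(\tfrac14\Delta f)\le n m^2\Lambda(f)$, whereas your exact tracking of the weights through the substitution $\alpha=\alpha'+e_k$ yields the sharper $\sqrt n\,\ell^2$ --- a cosmetic improvement, since only $(nm^2)^j$ is used downstream.
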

To explain the proof we note that since $f$ is a bihomogeneous form of
degree $m$, 
$\Delta^k f$ is a bihomogeneous form of degree $m-k$. 
If we have estimates on $f$, and if we find an explicit relation
between estimates 
on $f$ and $\Delta f$, related to the bound on $\max_{\| z \|=1}
|f\left(z,\oz\right)|$, a recursion procedure will give \eqref{eq:reg}

\begin{proof}
For $ z \in \C^n $ satisfying $ \| z \| = 1 $, put 
$z=\left(r_1 e^{i \te_1},..., r_ne^{i\te_n}\right) $, with $\sum r_i^2 = 1$. Then 
\[ \begin{split}
|f\left(z,\oz\right)| & \leq {\sum_{|\az|=|\beta|=m}  |c_{\az\beta}| r^\az r^\beta }
 \leq  \sum_{|\az|=|\beta|=m} \dfrac{\sqrt{\az ! \beta
       !}}{m!}|c_{\az\beta}|  \sqrt{\dfrac{m !}{ \az !}} r^\az \sqrt{
     \dfrac{m !}{\beta !} } r^\beta  \\
& = \lr{\widetilde{C}R,R } , \ \ \  \ \ 
\widetilde{C} : =\left(\frac{\sqrt{\az ! \beta
      !}}{m!}|c_{\az\beta}|\right)_{|\az|=|\beta|=m}, \ \ R 
:=\left(\sqrt{\frac{m !}{\az!}}r^\az\right)_{|\az|=m} .
\end{split} \]
Since
\begin{equation*}
\| R \|^2 = \disp{\sum_{|\az|=m} \dfrac{m!}{\az !} r^{2\az} = \| r \|^{2m} = 1} 
\end{equation*}
we have
\begin{equation*}
|f\left(z,\oz\right)| \leq  \lr{ \widetilde{C}R,R } \leq \rho(\widetilde{C}) ,
\end{equation*}
where $\rho(\widetilde{C})$ is the spectral radius of $\widetilde{C} $.
The spectral radius can be estimated by $ \Lambda ( f ) $ given in
\eqref{eq:lambda}:
we write $\widetilde{C} = U D U^{-1}$, where $ D $ and  $U$ are
diagonal and orthogonal matrix, respectively. Then
\begin{equation*}
\Lambda\left(f\right)^2 = \tra \left(\widetilde{C}
  \widetilde{C}^*\right) = \tra\left( U D^2 U^{-1} \right) =
\tra\left(
D^2 \right) \geq \rho (\widetilde{C})^2
\end{equation*}
and hence
\begin{equation}\label{eq:max}
\max_{\| z \|=1} |f\left(z, \oz\right)| \leq \Lambda\left(f\right) .
\end{equation}

 We now need to find a relation between $\Lambda\left(f\right)$ and $
 \Lambda\left(\frac{1}{4} \Delta f \right)$. Let $D:=(d_{\gamma
   \rho})$ be the matrix of the bi-homogeneous form $\frac{1}{4}\Delta
 f$, and let us chose $\gamma, \rho$ with
 $|\gamma|=|\rho|=m-1$. Denoting by $ \tilde c_{\alpha \beta} $ the
 entries of $ \widetilde C $ we obtain {
\begin{align}\label{eq:recursion}
\disp{d_{\gamma \rho}} & = {\dfrac{1}{\gamma ! \rho !} \dd{^\gamma}{z^\gamma} \dd{^\rho}{\oz^\rho} \sum_{i=1}^n \dd{}{z^i} \dd{}{\oz^i} f \left(0,0\right)}  =\disp{ \sum_{i=0}^n  \left(\gamma_i +1\right)\left(\rho_i +1\right) c_{\gamma +e_i, \rho +e_i}}\\
   & = \disp{\sum_{i=1}^n \left(\gamma_i +1\right)\left(\rho_i +1\right) \dfrac{m!}{\sqrt{(\gamma+e_i)! (\rho+e_i)!}} \tilde{c}_{\gamma +e_i, \rho +e_i}}\\
   & = \disp{ \dfrac{m!}{\sqrt{\gamma ! \rho !}}\sum_{i=1}^n \sqrt{\left(\gamma_i +1\right)\left(\rho_i +1\right)} \tilde{c}_{\gamma +e_i, \rho +e_i}} \\
   & = \disp{ \dfrac{(m-1)!}{\sqrt{\gamma ! \rho !}}m^2\sum_{i=1}^n  \tilde{c}_{\gamma +e_i, \rho +e_i} }.
\end{align}
If we put
$ \tilde{d}_{\gamma \rho} := {\sqrt{\gamma ! \rho !}} d_{\gamma \rho}/
{(m-1)!} $, and denote the correspoding matrix by $ \widetilde D $, 
then
\begin{equation}
\tilde{d}_{\gamma \rho} \leq m^2\sum_{i=1}^n  \tilde{c}_{\gamma +e_i, \rho +e_i},
\end{equation}}

and
\begin{align}
\Lambda \left( {{\textstyle{\frac14}} \Delta} f \right)^2 & = \disp{ \sum_{|\gamma|=|\rho|=m-1} d_{\gamma \rho}^2 \leq 
 \sum_{|\gamma|=|\rho|=m-1} m^4 \left(\sum_{i=1}^n  \tilde{c}_{\gamma +e_i, \rho +e_i} \right)^2  }\\
\label{eq:5} & \leq \disp{   m^4 n \sum_{|\gamma|=|\rho|=m-1} \sum_{i=1}^n  \tilde{c}_{\gamma +e_i, \rho +e_i}^2  }\\
 & \leq \disp{ n m^4 \cdot n \Lambda\left(f\right)^2 }. 
\end{align}
An easy recursion then gives
\begin{equation*}
\Lambda \left( \left({{\textstyle{\frac14}} \Delta} \right)^j f \right) \leq \left(nm^2\right)^j \Lambda\left(f\right).
\end{equation*}
and inequality \eqref{eq:max} applied to $\left(\frac{1}{4} \Delta \right)^j f$ instead of $f$ proves the lemma.
\end{proof}

The lemma and the lower bound stated after the inequality \eqref{eq:7} imply
\begin{equation}\label{eq:6}
\disp{\min_{1-2\epsi \leq \| z \|^2\leq 1+2\epsi} q\left(z,\oz\right)} \geq \lambda\left(f\right) \disp{ \left(1-2\epsi\right)^m  - \Lambda\left(f\right) \sum_{j=1}^m \dfrac{1}{j!} \left(  nm^2h\right)^j  \left(1+2\epsi\right)^{m-j} }.
\end{equation}
This combined with \eqref{eq:7} leads to the {\em basic  inequality}:
\begin{equation}\label{eq:starting}
\begin{split}
& \lr{qu,u}_{L^2_\Phi} \geq \\
& \ \left(1-E_\epsi\left(h,m+N,0\right)\right) \left( \lambda\left(f\right) \disp{ \left(1-2\epsi\right)^m  - \Lambda\left(f\right) \sum_{j=1}^m \dfrac{1}{j!}  \left( nm^2h \right)^j  \left(1+2\epsi\right)^{m-j} } \right) \\
& \ \ \ \ \ \ \ \ \ - \Lambda\left(f\right) \sum_{j=0}^m \dfrac{1}{j!} \left(
  nm^2h\right)^j E_\epsi\left(h,m+N,m-j\right) .
\end{split}
\end{equation}
All the  work that follows is aimed at finding $h_0$ such that 
for $ h < h_0 $ the right hand side of \eqref{eq:starting} is positive.

\section{Estimates on $E_\epsi$}
\label{estE}

Our goal in this section is to prove that the quantity
$E_\epsi\left(h,M,m\right)$ roughly decreases like
$\exp\left(-M\epsi^2\right) $, under some assumptions relating
$\epsi,h,M,m,n$. It is essentially due to the fact that the
homogeneous polynomials are localised in $\LD$-norm around the sphere
$S^{2n-1} \subset \C^n$, with $1/h \sim M $ -- see \cite[Theorem
13.16, (ii)]{Maciej:semi} for an explanation of this using the
harmonic oscillator. Here we prove
\begin{lem}\label{lem:estE}
Let $\epsi,h,m,n, M>0$ and let us call
\begin{equation}
\sigma := h(M+m+n-1).
\end{equation}
Assume that 
\begin{equation}\label{eq:assepsi}
\dfrac{3}{2} > \sigma >1,  \ \ 1 \geq  \epsi \geq 4(\sigma-1) .
\end{equation}
Then for $ E_\epsilon  $ defined in \eqref{eq:defE} we have
\begin{equation}\label{eq:estEm}
E_\epsi\left(h,M,m\right) \lesssim \disp{ h^m (M+m+n)^{2n+m} \dfrac{1}{\epsi ^2}  \exp\left(-\dfrac{M\epsi^2}{16} \right) }.
\end{equation}
\end{lem}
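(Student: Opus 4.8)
The plan is to reduce the supremum over $u \in \PP_M$ with $\|u\|_{\PP_M}=1$ to a single explicit computation, using the orthonormal basis $\{f_\alpha\}_{|\alpha|=M}$ of $\PP_M$ inside $H_\Phi$. First I would write $u = \sum_{|\alpha|=M} u_\alpha f_\alpha$ with $\sum |u_\alpha|^2 = 1$, and expand $\|\,\|z\|^k u\|_{\LD(\Oe)}^2 = \int_{\Oe} \|z\|^{2k} |u(z)|^2 e^{-\|z\|^2/h}\, dm(z)$. The key point is that the monomials $z^\alpha$ are \emph{orthogonal} in $\LD(\Oe)$ as well (not just in $\LD$), because the integration over the angular variables $\theta_1,\dots,\theta_n$ kills all cross terms $z^\alpha \bar z^\beta$ with $\alpha \neq \beta$, and the radial region $1-\epsi \le \|z\|^2 \le 1+\epsi$ is rotation-invariant. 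Hence $\|\,\|z\|^k u\|_{\LD(\Oe)}^2 = \sum_{|\alpha|=M} |u_\alpha|^2 \cdot \|\,\|z\|^k f_\alpha\|_{\LD(\Oe)}^2$, and therefore
\begin{equation*}
E_\epsi(h,M,k)^2 = \max_{|\alpha|=M} \frac{\int_{\Oe} \|z\|^{2k} |z^\alpha|^2 e^{-\|z\|^2/h}\, dm(z)}{\int_{\C^n} |z^\alpha|^2 e^{-\|z\|^2/h}\, dm(z)}.
\end{equation*}
This converts the problem into estimating, for each multi-index $\alpha$ with $|\alpha|=M$, a ratio of explicit Gaussian-type integrals.

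Next I would carry out the integration in polar coordinates $z_j = \sqrt{t_j}\, e^{i\theta_j}$. Writing $dm(z) = (2\pi)^n\, \frac{1}{2^n}\prod dt_j$ up to the angular measure (which cancels in the ratio), both numerator and denominator become products over $j$ of one-dimensional integrals in $t_j \ge 0$, except that in the numerator the factor $\|z\|^{2k} = (\sum t_j)^k$ couples the variables and the domain is restricted to $1-\epsi \le \sum t_j \le 1+\epsi$. The denominator is exactly computable: $\int_0^\infty t_j^{\alpha_j} e^{-t_j/h}\, dt_j = h^{\alpha_j+1}\alpha_j!$, so the denominator equals $(\text{const}) \cdot h^{M+n}\alpha!$. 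For the numerator one first bounds $\|z\|^{2k}\le (1+\epsi)^k \le (3/2)^k$ on $\Oe$, pulling it out; the remaining quantity is the probability, under the product Gamma density proportional to $\prod t_j^{\alpha_j} e^{-t_j/h}$, that $\sum t_j$ lies outside the window $[1-\epsi, 1+\epsi]$. The mean of $\sum t_j$ under this density is $h(M+n)$, and with $\sigma = h(M+m+n-1)$ the hypothesis \eqref{eq:assepsi} places this mean comfortably inside the window (the condition $\epsi \ge 4(\sigma-1)$ ensures $|h(M+n)-1|$ is at most $\epsi/2$, say). So the numerator is controlled by a large-deviation / tail estimate for a sum of independent Gamma (equivalently, rescaled exponential) random variables.

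The main obstacle — and where the factor $(M+m+n)^{2n+m}$ and the exponent $-M\epsi^2/16$ come from — is the sharp tail bound for $\Pr[\,|\sum t_j - 1| \ge \epsi\,]$. I would handle this with the exponential Chebyshev (Bernstein/Chernoff) method: introduce a tilt parameter, use $\int_0^\infty t^{\alpha_j} e^{-(1-s)t/h}\,dt = (1-sh)^{-(\alpha_j+1)} \cdot h^{\alpha_j+1}\alpha_j!$ to compute the moment generating function of $\sum t_j$ explicitly as $\prod_j (1- s h)^{-(\alpha_j+1)}$ (this is exactly where being able to compute the Gaussian-type moments in closed form is essential — it is the ``stationary phase'' computation relegated to the appendix), then optimize $s$. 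Taking a logarithm and Taylor-expanding near $s=0$ yields a Gaussian-type bound $\exp(-c M \epsi^2)$; tracking constants carefully through the optimization and through the restriction $\sigma < 3/2$ gives the $1/16$ in the exponent. The polynomial prefactor $h^m(M+m+n)^{2n+m}$ then collects: one $h^m$ and a factor $(3/2)^m \le$ (poly in $M+m+n$) from bounding $\|z\|^{2k}$ with $k=m$; a factor like $(M+n)^{2n}$ or so from the ratio of normalizations between the restricted and full integrals and from crude bounds on $\alpha!$-type terms; and the $1/\epsi^2$ from not optimizing $s$ perfectly near the boundary of the allowed range. Assembling the ratio, taking square roots, and absorbing universal constants into $\lesssim$ yields \eqref{eq:estEm}.
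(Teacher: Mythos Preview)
Your diagonalization idea is correct and is genuinely different from the paper's route. The observation that the $z^\alpha$ remain orthogonal on any torus-invariant region reduces $E_\epsi(h,M,k)^2$ to a single explicit ratio independent of $\alpha$; the paper instead writes $\|\,\|z\|^{m}u\|_{\LD(\Oe)}^2=\langle u,\Pi_\Phi^M\|z\|^{2m}\1_\Oe\Pi_\Phi^M u\rangle$, computes the kernel of $\Pi_\Phi^M$, and bounds the operator norm by Schur's test with weight $p(z)=\|z\|^M$, together with a Cauchy--Schwarz step that produces the two integrals $I_1$ and $I_2$. Your approach is cleaner and in fact tighter: it gives $E_\epsi^2$ exactly as the quotient of two one-dimensional radial integrals, whereas the Schur/Cauchy--Schwarz argument loses an extra factor $I_1\sim (M+n)^n$.

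There is, however, one genuine slip. Despite how $\Oe$ is typeset in the paper, the proof (and the basic inequality in Section~\ref{basi}) only makes sense with $\Oe$ equal to the \emph{complement} of the annulus $\{1-\epsi\le\|z\|^2\le1+\epsi\}$: see the passage from \eqref{eq:I2} onward, where the radial integral is over $r^2\notin[1\pm\epsi]$, and note that \eqref{eq:6} bounds $\min_{\C^n\setminus\Oe}q$ by a minimum over the $2\epsi$-annulus. Your sentence ``bound $\|z\|^{2k}\le(1+\epsi)^k$ on $\Oe$, pulling it out; the remaining quantity is the probability that $\sum t_j$ lies outside the window'' is therefore internally inconsistent: if the region is the annulus the remaining probability is $\approx 1$, and if it is the complement then $\|z\|^{2k}$ is unbounded and cannot be pulled out. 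The fix is simply not to extract $\|z\|^{2k}$. Writing $t_j=|z_j|^2$ and passing to $s=\sum t_j$ with the Dirichlet change of variables on the simplex, the simplex integral cancels in the ratio and you obtain
\[
E_\epsi(h,M,k)^2=\frac{\displaystyle\int_{s\notin[1\pm\epsi]} s^{M+k+n-1}e^{-s/h}\,ds}{\displaystyle\int_0^\infty s^{M+n-1}e^{-s/h}\,ds}
=\frac{1}{h^{M+n}(M+n-1)!}\int_{s\notin[1\pm\epsi]} s^{M+k+n-1}e^{-s/h}\,ds,
\]
which is precisely the integral handled by the paper's Lemma~\ref{lem:estJ} after the substitution $s=h(M+k+n-1)t$ and the inclusion \eqref{eq:assumption}. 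Your Chernoff/Bernstein plan for the tail is equivalent to that lemma (the MGF you write down is the Gamma$(M+n,1/h)$ transform, and optimizing the tilt reproduces the $e^{-\rho(1+\delta^2/4)}$ bound), so once the region is corrected your argument goes through and in fact yields a slightly sharper prefactor than \eqref{eq:estEm}.
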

\begin{proof}
Let $\Pi_\Phi^M$ be the projection from $\LD$ to $\PP_M$.  For $u \in
\PP_M$. 
To estimate the right hand side in \eqref{eq:defE} we note that
\begin{equation*}
\begin{split} 
\| \| z \|^m u \|_{\LD(\Oe)}^2 & = \lr{ u, \Ppm \| z \|^{2m} \1_\Oe
  \Ppm u }_{\LD} \\
& \leq  \| \Ppm \| z \|^{2m} \1_{\Oe} \Ppm \|_{\LD \rightarrow \LD}
\cdot \| u \|^2_{\LD}
\end{split}
\end{equation*} 
 Hence it suffices to 
estimate the norm operator $\| \Ppm \| z \|^{2m} \1_{\Oe} \Ppm \|$, 
and for that we will use the following standard variant of Schur's Lemma:
\begin{lem}\label{Schur}
Let $(X,\mu)$ be a measure space, $K : L^2(X) \rightarrow L^2(X)$ a selfadjoint operator with kernel $k$, that is
\begin{equation*}
Ku(x) = \disp{ \int_X k(x,y) u(y) d\mu(y) }.
\end{equation*}
Assume that there exists an almost everywhere positive function $p$ on $X$ and $\lambda>0$ such that 
\begin{equation}
\label{eq:Schur}
\int_X |k(x,y)| p(y) d\mu(y) \leq \lambda p(x).
\end{equation}
Then $\| K \| \leq \lambda$.
\end{lem}
%

To apply the lemma we first construct the kernel of the projector
$\Ppm = \sum_{|\az|= M} f_\az f_\az^\ast $,
where $f_\az$ was defined in \eqref{eq:falpha}, and $f_\az^*$ is the
linear form $\lr{f_{\az},\cdot}_{\LD}$. 
Writing
\begin{equation*}
\disp{ \Ppm u (z) := \int_{\C^n} \Pi^M(z,w) u(w) e^{ - \| w^2 \| /h}
  dm(w)  }, 
\end{equation*}
we have 
\begin{align*}
\Pi^M(z,w) = & \disp{\sum_{|\az |=M} f_\az(z) \overline{f_\az(w)} }\\
           = & \disp{ \sum_{|\az |=M} \frac{ 1 } { \left( \pi h \right)^n } \left( \dfrac{ 1 } { h^M  \alpha ! }\right) z^\alpha \ow^\az } = \disp{  \frac{ 1 } { \pi^n h^{n+M} } \sum_{|\az |=M} \frac{ 1 } { \alpha !  } z^\alpha \ow^\az } = \disp{ \dfrac{ \lr{z,\ow}^M } { M! \pi^n h^{n+M} } }.
\end{align*}
It follows that the integral kernel of 
 $K = \Ppm \| z \|^{2m} \1_{\Oe} \Ppm$  with respect to the Gaussian
 measure $dG(z):=\exp(-\| z \|^2/h)dm(z)$, $k$,  is given by 
\begin{equation*}
k(z,w) = \disp{\int_{\Oe}  \dfrac{ \lr{z,\overline{\ze}}^M } { M!
    \pi^n h^{n+M} } \dfrac{ \lr{\ze,\ow}^M } { M! \pi^n h^{n+M} } \|
  \ze \|^{2m} dG(\ze) }. 
\end{equation*}
This suggests natural choice of the weight $p = \| z \|^M$  in lemma
\ref{Schur}, 
and  we need to estimate the 
corresponding parameter $\lambda$ in \eqref{eq:Schur}. For that, we need an upper bound on the integral
\begin{equation*}
\disp{ \int_{C^n} |k(z,w)| \| w \|^{M} dG(w) }.
\end{equation*}
An application of the Cauchy-Schwarz inequality inequality gives
\begin{align*}
& \disp{ \int_{C^n} |k(z,w)| \| w \|^{M} dG(w)}  \leq \disp{ \int_{\C^n} \int_{\Oe} \| w \|^M    \dfrac{ \| z \|^M \| \ze \|^M } { M! \pi^n h^{n+M} } \dfrac{ \| \ze \|^M \| w \|^M } { M! \pi^n h^{n+M} } \| \ze \|^{2m} dG(\ze) dG(w) }\\
      & \ \ \ \leq \disp{ \| z \|^M \left( \int_{\C^n} \dfrac{\| w \|^{2M}}{ M! \pi^n h^{n+M} } dG(w) \right) \left( \int_{\Oe}     \dfrac{ \| \ze \|^{2M+2m} } { M! \pi^n h^{n+M} } dG(\ze) \right)}.
\end{align*}
Thus it is sufficient to estimate the following integrals:
\begin{equation}\label{eq:sep12}
I_1 = { \int_{\C^n} \dfrac{\| w \|^{2M}}{ M! \pi^n h^{n+M} } dG(w) } ,
\ \  \ \ 
I_2 = 
{ \int_{\Oe}     \dfrac{ \| \ze \|^{2M+2m} } { M! \pi^n h^{n+M} } dG(\ze) }.
\end{equation}
A polar coordinates change of variables, followed by a substitution $
t  = r^2/ h $,   gives
\begin{equation} \label{eq:21} \begin{split}
I_1 & = \dfrac{|S^{2n-1}|}{M! \pi^n h^{n+M}} \int_0^\infty     r^{2M+2n-1} e^{-r^2/{h}}  dr 
 = \disp{\dfrac{|S^{2n-1}|}{2 M! \pi^n h^{n+M}} h^{n+M} \int_0^\infty     t^{M+n-1} e^{-t} dt}  \\ 
& = \dfrac{(M+n-1)!}{ M! (n-1)!} \leq \binom{M+n}{n} 
\leq (M+n)^n , 
\end{split} 
\end{equation} 
where $ |S^{2n-1}| = {2\pi^n}/{(n-1)!}$ denotes the volume of the $2n-1$ 
dimensional sphere.

Turning to $ I_2 $ in \eqref{eq:sep12} we make two changes of
variables, $z=r\te$, then $r^2 = t$, so that 
\begin{equation}
\label{eq:I2}
\begin{split}
I_2 & \disp{ = |S^{2n-1}| \int_{r^2 \notin [1\pm \epsi]} \dfrac{r^{2M+2m+2n-1}}{M! \pi^n h^{n+M}} \exp\left( -\dfrac{r^2}{h} \right) dr }\\
& \disp{ =\dfrac{|S^{2n-1}|}{2M! \pi^n h^{n+M}}\int_{t \notin [1\pm \epsi]} t^{M+m+n-1} e^{-t/h} dt }.
\end{split}
\end{equation}
The last integral is very close to the integral appearing in the
following lemma which will be proved in the appendix:
\begin{lem}\label{lem:estJ}
Let $\rho>0, \delta < 1$. We define
\begin{equation*}
J\left(\rho,\delta\right) := \disp{ \int_{t \notin [1-\delta, 1+\delta]} t^\rho e^{ -\rho t} dt .}
\end{equation*}
Then
\begin{equation}\label{eq:estJ1}
J\left(\rho,\delta\right) \lesssim \dfrac{1}{\rho \delta^2} \exp \left(-\rho\left(1+\dfrac{\delta^2}{4}\right)\right).
\end{equation}
\end{lem}
To apply this lemma to the last integral in \eqref{eq:I2} 
we make the change of variable $t/h = (M+m+n-1)s$. 
To assure that the  interval of integration does not change much, we
claim that under assumptions of Lemma \ref{lem:estE} we have, 
\begin{equation}\label{eq:assumption}
[1 \pm \epsi /2] \subset \dfrac{1}{h(M+m+n-1)} [1 \pm \epsi] = \dfrac{1}{\sigma} [1 \pm \epsi].
\end{equation}
Indeed, \eqref{eq:assepsi} implies the following inequalities:
\begin{equation}\label{eq:p12}
1- \dfrac{\epsi}{2} \geq \dfrac{1}{\sigma} (1-\epsi), \ \ \ 
1+ \dfrac{\epsi}{2} \leq \dfrac{1}{\sigma} (1+\epsi).
\end{equation}
The first one is straightforward, since it is equivalent to $2\sigma-2
\geq (\sigma-2) \epsi$, and $\sigma-2 <0$. 
The second inequality in \eqref{eq:p12} is equivalent to
$ {(2\sigma-2)}/({2-\sigma}) \leq \epsi $,  so that in view of 
\eqref{eq:assepsi} we need to check that
$ {(2\sigma-2)}/({2-\sigma}) \leq 4(\sigma -1) $
which follows from the assumption 
$\sigma < 3/2$. 

Returning to \eqref{eq:I2} we have 
\begin{equation*}
\disp{\int_{t \notin [1\pm \epsi]  } t^{n+m+M-1} e^{-t/h} dt \leq [h(M+m+n-1)]^{M+m+n}\int_{ s \notin [1 \pm \epsi/2]} (te^{-t})^{M+m+n-1} dt }.
\end{equation*}
Applying Lemma \ref{lem:estJ} gives
\begin{align*}
\disp{\int_{t \notin [1\pm \epsi/2]  } t^{n+m+M-1} e^{-t/h} dt} &
\lesssim \disp{  \dfrac{[h(M+m+n-1)]^{M+m+n}}{(M+m+n-1)\epsi^2} 
e^{-(M+n+m-1) (1+{\epsi^2}/{16} )   }} \\
& \lesssim \disp{ \dfrac{[h(M+m+n)]^{M+m+n}}{\epsi^2} e^{-(M+n+m) (1+{\epsi^2}/{16} ) } }.
\end{align*}
Hence 
\begin{equation}
\label{eq:16}
\begin{split}
I_2 
 & \lesssim \disp{[h(M+m+n)]^{M+m+n} \dfrac{|S^{2n-1}|}{2\pi^n}
   \dfrac{e^{-M-m-n}}{h^M M! }  \dfrac{1}{h^n\epsi^2} 
e^{ -{M\epsi^2}/{16}  } } \\
 & \lesssim \disp{ h^m (M+m+n)^{M+m+n}
  \dfrac{e^{-M-n-m}}{M! (n-1)! }  \dfrac{1}{\epsi^2}
e^{ -{M\epsi^2}/{16}  }  }
\end{split}
\end{equation}
To simplify the upper bound in \eqref{eq:16} 
we first use Stirling's formula to obtain (with a small irrelevant loss since
$ k^k \lesssim k! e^k / \sqrt k $)
\begin{equation*}
\disp{ (M+m+n)^{M+m+n} \lesssim (M+m+n)! e^{M+m+n} } . 
\end{equation*}
Thus the bound in \eqref{eq:16} can be replaced by 
\begin{align*}
I_2 \lesssim \disp{ h^m \dfrac{(M+m+n)!}{M!} \dfrac{1}{\epsi ^2}
  e^{- M \epsilon^2/16} } 
\lesssim \disp{ h^m (M+m+n)^{m+n} \dfrac{1}{\epsi ^2}    e^{- M
    \epsilon^2/16}   }.
\end{align*}
Combining this with the bound \eqref{eq:21},  and applying Lemma
\ref{Schur} gives
\begin{align*}
\| K \| & \lesssim \disp{ h^m (M+n)^n (M+m+n)^{m+n} \dfrac{1}{\epsi ^2}e^{- h^{-1/3}/16}  }\\
  & \lesssim \disp{ h^m (M+n+m)^{2n+m} \dfrac{1}{\epsi ^2}  e^{-      h^{-1/3}/16}  }.
\end{align*}
This completes the proof of Lemma \ref{lem:estE}.
\end{proof}

\section{Proof of Theorem \ref{thm:boundN}}
\label{proof}

We now combine the basic inequality
\eqref{eq:starting} with the estimate on $E_\epsi$ given in Lemma
\ref{lem:estE}. 
We split \eqref{eq:starting} into four terms:
\begin{enumerate}
\item[$\left(i\right)$] ${A_0}=\lambda\left(f\right) \disp{
    \left(1-2\epsi\right)^m}$ which is the leading term; 
\item[$\left(ii\right)$] ${A_1}=\disp{\lambda\left(f\right)
    E_\epsi\left(h,m+N,0\right)  \left(1-2\epsi\right)^m}$ decreases 
exponentially to $0$ as $h\rightarrow 0$;
\item[$\left(iii\right)$] ${A_2} = \disp{\Lambda\left(f\right) \sum_{j=1}^m
    \dfrac{1}{j!}  \left( nm^2h\right)^j
    \left(1+2\epsi\right)^{m-j}}$ will be estimated by noting that it
  is dominated by its first term;
\item[$\left(iv\right)$] ${A_3}=\disp{\Lambda\left(f\right) \sum_{j=0}^m
    \dfrac{1}{j!} \left( nm^2h\right)^j
    E_\epsi\left(h,m+N,m-j\right)}$ will require more care but decreases 
exponentially to $0$ as $h\rightarrow 0$.
\end{enumerate}

We want to optimize the parameters $h, M, \epsi$ as functions of the
order of $ f$,  $m$, and the dimension $ n $. 
We aim to show that ${A_0} \gg {A_1}, {A_2}, {A_3}$, using Lemma
\ref{lem:estE}. For this we need to check that the assumption \eqref{eq:assepsi} is satisfied. 

The basic strategy is outlined as follows
\begin{itemize}
\item \eqref{eq:assepsi} is satisfied if for all $0\leq j \leq m$, $h^{-1} \sim N+2m+n-j$ and $h(N+2m+n-j) \leq 1$. Thus we need $h^{-1} \sim N \gg m,n$.
\item ${A_0} \gtrsim {A_1}$: we want to apply Lemma \ref{lem:estE} and thus we need $\epsi^2/h \geq -n\log (h)$;
\item ${A_0} \gtrsim {A_2}$: for this to hold ${A_2}$ has to be greater than
  the first term of the sum in ${A_2}$, 
$nm^2(1+2\epsi)^{m-1} h$; thus the term 
$(1+2\epsi)^m$ has to remain bounded as $m \rightarrow \infty$: we need $\epsi \lesssim 1/m$.
\item ${A_0} \gtrsim {A_3}$: the term ${A_0}$ has -- at least -- to be greater
  than the first term of the sum in ${A_3}$ ; thus we need to have 
${A_0} \gtrsim E_\epsi(h,N+m,m)$; using Lemma \ref{lem:estE}, this holds
when $\epsi^2/h \geq -(n+m)\log (h)$.
\end{itemize}
We define $ \epsi $ as $\epsi=h^a$, where $ a $ will be determined.
From the considerations above  we need 
\[ h^{2a-1} \gtrsim (n+m) \log \frac 1 h  \ \ \text{and} \ \ h^a \lesssim
1/m. \]
To express this as one condition, we demand $a=1-2a$, that is,
$a=1/3$. This leads to the necessary relations:
\begin{equation}\label{eq:assump}
\epsi = h^{-1/3}/16, \ \ h \lesssim (n+m)^{-3}, \ \ N=h^{-1}.
\end{equation}

\subsection*{Application of the estimates on $E_\epsi$.} To use 
estimates on 
$E_\epsi(h,m+N,m-j)$ for $0 \leq j \leq m$ we need the
assumption \eqref{eq:assepsi}  to hold. That means that
\begin{equation}\label{eq:assump1}
1 <h(N+2m+n-j-1) < \dfrac{3}{2}, \ \ 1 \geq \epsi \geq 4(h(N+2m-j+n)-1).
\end{equation}
Since $ N = 1/h $, both inequalities are satisfied for all $0 \leq j
\leq m$ if they are satisfied for $j=0$. Recallingl that $\epsi=
h^{-1/3}/16 \leq 1$, this in turn follows from
\begin{equation}\label{eq:assump1ab}
4h(2m+n) \leq  \epsi , \ \ \ \
h(2m+n) \leq \dfrac{1}{2}.
\end{equation}
If $h \leq \frac{1}{64}(m+n)^{-3}$, then
\begin{equation*}
\dfrac{8\delta}{ (m+n)^2} \leq \frac{\delta^{1/3}}{ (m+n)},
\end{equation*}
which implies \eqref{eq:assump1ab}.  We conclude that
\eqref{eq:assump1} holds, hence also \eqref{eq:assepsi}, and  hence we
can apply Lemma \ref{lem:estE} to 
$E_\epsi(h,m+N,m-j)$, $0 \leq j \leq m$.

\subsection*{Final estimate on $h$.} We first start by simplifying ${A_1}$. Lemma \ref{lem:estE} shows that
\[ 
E_\epsi\left(h,m+N,0\right) \lesssim (n+m+N)^{2n} \epsi^{-2} e^{-{N\epsi^2}/{16} }
\lesssim  t({3}/{h})^{2n+1} e^{- h^{-1/3}/16} .
\]
Thus 
\begin{equation}\label{eq:est(ii)}
\disp{{A_1} \lesssim \lambda(f) (1-2\epsi)^m ({3}/{h})^{2n+1} e^{- h^{-1/3}/16}.}
\end{equation}

To treat $ A_2 $ we note that 
\begin{equation*}
A_2 = \disp{\Lambda\left(f\right) \sum_{j=1}^m \dfrac{ \left(
    nm^2h\right)^j}{j!}  \left(1+2\epsi\right)^{m-j}} \leq \disp{
  \Lambda\left(f\right) \left(1+2\epsi\right)^m \left(e^{nm^2 h}   -1\right)} .
\end{equation*}
But since $h \leq (n+m)^{-3}$, $nm^2 h \leq 1$, and thus $\exp
\left(nm^2 h\right)  -1 \lesssim nm^2h$, and 
\begin{equation}\label{eq:est(iii)}
{A_2} \lesssim \Lambda\left(f\right) \left(1+2\epsi\right)^m nm^2h.
\end{equation}
We finally treat ${A_3}$. For that, we need the estimate on
$E_\epsi\left(h,m+N,m-j\right)$ proved in 
Lemma \ref{lem:estE}:
\begin{align*}
\disp{E_\epsi\left(h,m+N,m-j\right) }&\lesssim   \disp{  h^{m-j}
 (N+n+2m-j)^{2n+m-j} \epsi ^{-2}  
e^{-{(m+N)\epsi^2}/{16}  } } 
\\
      & \lesssim \disp{ {(3N)^{2n}}\epsi^{-2} (3hN)^{m-j}
          e^{-{(m+N)\epsi^2}/{16} }   }
\\
        & \lesssim \disp{  \left({3}/{h}\right)^{2n+1} 3^m
          e^{-h^{-1/3}/16}    
}  .
\end{align*}
Inserting this in the definition of ${A_3}$, 
\begin{equation*}
A_3 = \disp{\Lambda\left(f\right) \sum_{j=0}^m \dfrac{
    \left(nm^2h\right)^j }{j!} E_\epsi\left(h,m+N,m-j\right)}.
\end{equation*}
this gives
\begin{align*}
{A_3} & \lesssim  \disp{\Lambda\left(f\right)\sum_{j=0}^m
\dfrac{
    \left(nm^2h\right)^j }{j!} \left({3}/{h}\right)^{2n+1}
  3^m            e^{-h^{-1/3}/16} }   \\
 & \lesssim \disp{ \Lambda(f) \left({3}/{h}\right)^{2n+1} 3^m
   e^{-h^{-1/3}/16 } } .
\end{align*}
Here we used again $nm^2 h \leq 1$. Thus we get:
\begin{equation}\label{eq:est(iv)}
{A_3} \lesssim \disp{ 3^m \left({3}/{h}\right)^{2n+1}
  e^{-h^{-1/3}/16 }  }. 
\end{equation}

We recall that we are looking for $h_0$ such that for $ h < h_0 $, 
\begin{equation}\label{eq:18}
\lambda(f) (1-2\epsi)^m \geq {A_1} + {A_2} + {A_3}
\end{equation}
is satisfied. 
In view of \eqref{eq:est(ii)}, \eqref{eq:est(iii)},
\eqref{eq:est(iv)},  to obtain \eqref{eq:18} it is sufficient to have 
\begin{equation}\label{eq:19ii}
\lambda(f) (1-2\epsi)^m \geq 3 \lambda(f) (1-2\epsi)^m
\left({3}/{h}\right)^{2n+1}           e^{-h^{-1/3}/16} ,
\end{equation}
\begin{equation}\label{eq:19iii}
\lambda(f) (1-2\epsi)^m \geq 3 \Lambda(f) (1+2\epsi)^m nm^2 h ,
\end{equation}
\begin{equation}\label{eq:19iv}
\lambda(f) (1-2\epsi)^m \geq 3 \Lambda(f) 3^m \left({3}/{h}\right)^{2n+1}             e^{-h^{-1/3}/16}.
\end{equation}

Since $h \leq \delta=1/64$, $\epsi \leq 1/4$ and then $(1-2\epsi)^m \geq 10^{-m}$; moreover 
\begin{equation*}
\left( \dfrac{1+2\epsi}{1-2\epsi} \right)^m \leq \left( 1+ 8\epsi \right)^m \leq \left( 1+ \dfrac{8}{m} \right)^m \lesssim 1.
\end{equation*}
Thus \eqref{eq:19iii}, \eqref{eq:19iv} can be changed in
\begin{equation}\label{eq:22iii}
\lambda(f) \geq 3 \Lambda(f) nm^2 h ,
\end{equation}
\begin{equation}\label{eq:22iv}
\lambda(f) \geq 3 \Lambda(f) 30^m \left({3}/{h}\right)^{2n+1}
e^{-h^{-1/3}/16} .
\end{equation}

 Since $\lambda(f) \leq \Lambda(f)$, \eqref{eq:19ii} and \eqref{eq:22iv} are both implied by
\begin{equation}\label{eq:20}
\dfrac{\lambda(f)}{\Lambda(f)} \geq 3 \cdot 30^m
\left({30}/{h}\right)^{2n+1}             e^{-h^{-1/3}/16} .
\end{equation}
The logarithmic version of this inequality is
\begin{equation*}
\log\left( \dfrac{\lambda(f)}{\Lambda(f)} \right) \geq \log(3) +
(m+2n+1) \log(30) -(2n+1) \log(h) - h^{-1/3}/16, 
\end{equation*}
and thus taking $h \lesssim \log\left( \Lambda(f) / \lambda(f)
\right)^{-3} (m+n)^{-3}\log(n)^{-3}$ 
assures its validity. Indeed, 
\begin{equation}\label{eq:111}
h \lesssim \log \left( \dfrac{\Lambda(f)}{\lambda(f)} \right)^{-3} \
\Longrightarrow \ \ \log\left( \dfrac{\lambda(f)}{\Lambda(f)} \right) \gtrsim -h^{-1/3}
\end{equation}
and
\begin{equation}\label{eq:112}
h \lesssim \left( n \log (n) \right)^{-3} \Rightarrow n \log(h) \gtrsim -h^{-1/3}.
\end{equation}

The estimate \eqref{eq:22iii} is straightforward: we need
\begin{equation}\label{eq:113}
h \lesssim \dfrac{\lambda(f)}{\Lambda(f)} n^{-1}m^{-2}.
\end{equation} 

Let us chose 
\begin{equation*}
h \lesssim  \min\left(\dfrac{\lambda(f)}{ \Lambda(f)}, \log\left( \dfrac{\Lambda(f)}{\lambda(f)} \right)^{-3} \right) (m+n)^{-3}\log(n)^{-3}.
\end{equation*}
Then $h$ satisfies the three necessary conditions for Theorem
\ref{thm:boundN} to hold: \eqref{eq:111}, \eqref{eq:112}, and
\eqref{eq:113}. The bound on $N = 1/h$ is then given by
\begin{equation*}
N \gtrsim  \max \left( \log\left( \dfrac{\Lambda(f)}{\lambda(f)} \right)^{3}, \dfrac{\Lambda(f)}{ \lambda(f)} \right) (m+n)^3\log^3 n
\end{equation*}
which is the same as
\begin{equation*}
N \gtrsim  \dfrac{\Lambda(f)}{ \lambda(f)} (m+n)^3\log^3 n.
\end{equation*}

\vspace{0.5cm}
\begin{center}
\noindent
{\sc  Appendix: {A non-stationary phase lemma}} 
\end{center}
\vspace{0.4cm}
\renewcommand{\theequation}{A.\arabic{equation}}
\refstepcounter{section}
\renewcommand{\thesection}{A}
\setcounter{equation}{0}

We prove Lemma \ref{lem:estJ}. 
Let $\varphi\left(t\right)=-\log\left(t\right)+t$. Then $\varphi$ is a
one to one mapping on $(0,1]$ and on $[1,\infty)$. Let us then
consider the following integrals:
\begin{equation*}
J^-\left(\rho,\delta\right) = \disp{\int_0^{1-\delta}
  e^{\rho\left(\log\left(t\right)-t\right)} dt},  \ \ \ 
J^+\left(\rho,\delta\right) = \disp{\int_{1+\delta}^\infty e^{\rho\left(\log\left(t\right)-t\right)} dt}.
\end{equation*}
The change of variable $\varphi\left(t\right)=x$ gives
\begin{equation*}
J^-\left(\rho,\delta\right) = \disp{\int_{c^-}^\infty e^{-\rho x} \left(\dfrac{1}{\varphi^{-1}\left(x\right) }-1\right)^{-1} dx},
\end{equation*}
with $c^- = \varphi\left(1-\delta\right)$. Thus we need estimates on
$\varphi^{-1}\left(x\right)$. But on $(0,1-\delta]$, we have
$\varphi\left(t\right) \leq 1-\delta - \log\left(t\right)$. It implies
$\varphi^{-1}\left(x\right) \leq e^{1-\delta -x}$. This gives
\begin{equation*}
J^-\left(\rho,\delta\right) \leq \disp{\int_{c^-}^\infty \dfrac{e^{-\rho x}}{ e^{x-1+\delta} -1 }  dx}.
\end{equation*}
A lower bound for $e^{x-1+\delta} -1$ is 
\begin{equation*}
e^{x-1+\delta} -1 \geq \left( e^{-1+\delta}-e^{-c^-} \right) e^x \geq \delta e^{-1+\delta+x}. 
\end{equation*}
and hence
\begin{equation}\label{est1}
\begin{split} 
J^-\left(\rho,\delta\right) & \leq \disp{\int_{c^-}^\infty
  \dfrac{e^{1-\delta}}{\delta}e^{-\left(\rho +1\right)x}   dx =
  \dfrac{1-\delta}{\delta \left(\rho+1\right)}
  \left(\left(1-\delta\right) e^{-1+\delta}\right)^\rho }
\\ & \leq { \dfrac{1}{\rho \delta} \left(\left(1-\delta\right)
  e^{-1+\delta}\right)^\rho}.
\end{split} 
\end{equation}

 The same change of variable applied to  $J^+$ gives
\begin{equation*}
J^+\left(\rho,\delta\right) = \disp{\int_{c^+}^\infty e^{-\rho x} \left(1-\dfrac{1}{\varphi^{-1}\left(x\right) }\right)^{-1} dx}
\end{equation*}
with $c^+ = \varphi\left(1+\delta\right)$. On $(1+\delta, \infty)$, we have $\varphi\left(t\right) \leq t$ and then $\varphi^{-1}\left(x\right) \geq x$.
\begin{equation*}\label{eq:2}
\begin{split} 
J^+\left(\rho,\delta\right)  \leq \disp{\int_{c^+}^\infty e^{-\rho x}
  \left(1-\dfrac{1}{x}\right)^{-1} dx 
\leq = \dfrac{c^+}{c^+-1}\int_{c^+}^\infty  e^{-\rho x}dx}.
\end{split}
\end{equation*}
Since $\delta < 1$,
\begin{equation*}
\dfrac{c^+}{c^+-1} = \dfrac{\varphi(1+\delta)}{\varphi(1+\delta)-1} \lesssim \dfrac{1}{\delta^2}
\end{equation*}
and thus
\begin{equation}\label{est2}
J^+\left(\rho,\delta\right) \lesssim  \dfrac{1}{\rho \delta^2} \left( \left(1+\delta\right) e^{-1-\delta} \right)^\rho.
\end{equation}

Now, 
\begin{equation*}
\left(1-\delta\right)e^{-1+\delta}\leq \left(1+\delta\right)e^{-1-\delta}, \ \ \delta^2 \leq \delta,
\end{equation*}
and hence the estimates \eqref{est1} and \eqref{est2} give
\begin{equation*}
J\left(\rho,\delta\right) = J_- \left(\rho,\delta\right)
+ J_+ \left(\rho,\delta\right)
\lesssim \dfrac{1}{\rho \delta^2} \left(\left(1+\delta\right)e^{-1-\delta}\right)^\rho .
\end{equation*}
Also, 
\begin{equation*}
\left(1+\delta\right)e^{-\delta} \leq e^{ -{\delta^2}/{4} }, 
\end{equation*}
so that finally
\begin{equation*}
J\left(\rho,\delta\right) \lesssim \dfrac{1}{\rho \delta^2} \exp \left(-\rho\left(1+\dfrac{\delta^2}{4}\right)\right)  .
\end{equation*}


\begin{thebibliography}{2}

\bibitem{Ar} E. Artin, 
{\"Ü}ber die Zerlegung definiter Funktionen in Quadrate, 
in Collected Papers, Mathematisches Seminar, Hamburg, 1926, Addison–Wesley, Reading, MA, 1965, pp. 273–288.

\bibitem{Cad} D.W. Catlin and J.P. D'Angelo, 
A stabilization theorem for Hermitian forms
and applications to holomorphic mappings, 
Math. Res. Lett. {\bf 3} (1996), 149--166.

\bibitem{Da} J.P. D'Angelo, 
Hermitian analogues of Hilbert's 17th problem, Adv. Math. {\bf 226}
(2011), 4607--4637.

\bibitem{Da1} J.P. D'Angelo, 
Inequalities from Complex Analysis, Carus Mathematical Monograph {\bf 28}, 
Mathematics Association of America, 2002.

\bibitem{NS} J. Nie and M. Schweighofer, 
On the complexity of Putinar's Positivstellensatz,
Journal of Complexity, 
{\bf 23} (2007), 135--150.

\bibitem{PR} V. Powers and  B. Reznick,  A new bound for P\'olya's Theorem with applications to polynomials
positive on polyhedra, 
J. Pure Appl. Algebra {\bf 164} (2001), 221--229.

\bibitem{Puti} M. Putinar, 
Sums of Hermitian squares: old and new, in vol. Semidefinite optimization and convex algebraic geometry, (G. Bleckherman, P. Parrilo, R. Thomas, eds.), SIAM, to appear.

\bibitem{Put1} M. Putinar, Positive polynomials on compact semi-algebraic sets, Ind. Univ. Math. J., 42
(1993), 969--984.

\bibitem{Quillen} D.G. Quillen, 
On the representation of hermitian forms as sums of squares,
Invent. math. {\bf 5} (1968), 237--242.

\bibitem{ToYe} W.-K. To and  S.-K. Yeung,
Effective isometric embeddings for certain Hermitian holomorphic line bundles,  
J. London Math. Soc.(2)  {\bf 73}(2006), 607--624.

\bibitem{Maciej:semi} M. Zworski, 
{\em Semiclassical Analysis,} Graduate Studies in 
Mathematics, AMS, 2012.


\end{thebibliography}
\end{document}